\title{Markov's Theorem}
\author{Matteo Barucco\\ Nirvana Coppola}
\date{\today}
\def\BState{\State\hskip-\ALG@thistlm}
\newcommand{\nuovapagina}{
  \newpage
  \thispagestyle{empty}
  \cleardoublepage
}
\theoremstyle{definition}
\newtheorem{theorem}{Theorem}
\newtheorem{lemma}[theorem]{Lemma}
\newtheorem{proposition}[theorem]{Proposition}
\newtheorem{fact}[theorem]{Fact}
\theoremstyle{definition}
\newtheorem{definition}[theorem]{Definition}
\newtheorem{remark}[theorem]{Remark}
\renewenvironment{proof}[1][\proofname]{%
  \par\pushQED{\qed}\normalfont%
  \topsep6\p@\@plus6\p@\relax
  \trivlist\item[\hskip\labelsep\bfseries#1\@addpunct{.}]%
  \ignorespaces
}{%
  \popQED\endtrivlist\@endpefalse
}
\def\bigquotient#1#2{%
    \raise1ex\hbox{$#1$}\Big/\lower1ex\hbox{$#2$}%
}
\def\quotient#1#2{\mathchoice
{\raisebox{.3ex}{$\mathsurround=0pt\displaystyle #1$}\mkern -1mu/\mkern -1mu\raisebox{-.5ex}{$\mathsurround=0pt\displaystyle #2$}}
{\raisebox{.3ex}{$\mathsurround=0pt\textstyle #1$}\mkern -1mu/\mkern -1mu\raisebox{-.5ex}{$\mathsurround=0pt\textstyle #2$}}
{\raisebox{.1ex}{$\mathsurround=0pt\scriptstyle #1$}\mkern -1mu/\mkern -1mu\raisebox{-.3ex}{$\mathsurround=0pt\scriptstyle #2$}}
{\raisebox{.1ex}{$\mathsurround=0pt\scriptscriptstyle #1$}\mkern -1mu/\mkern -1mu\raisebox{-.1ex}{$\mathsurround=0pt\scriptscriptstyle #2$}}}
\DeclareMathOperator{\inter}{int}
\begin{document}

\maketitle
\tableofcontents


\chapter{Introduction}

This survey consists of a detailed proof of Markov's Theorem based on \cite{B} and Carlo Petronio's classes. It was part of an exam project in A.Y. 2016/2017 for the course Knot Theory.

Let us first introduce some basic definitions and known properties about braids.

\begin{definition}
A $n$-braid or braid over $n$ strings is the image of a function:
$$\alpha : \coprod_{j=1}^n [0,1] \rightarrow \mathbb{R}^2 \times [0,1]$$
such that, if $\alpha_j$ is the restriction of $\alpha$ on the $j$-th copy of $[0,1]$, there exists a permutation $\tau \in S_n$ with the following properties:
\begin{align*}
&\alpha^{(j)}(0)=(0,0,j);\\
&\alpha^{(j)}(1)=(0,0,\tau(j));\\
&\alpha^{(j)}_3 \text{ is strictly increasing for every } j;\\
&\alpha \text{ is a proper embedding.}
\end{align*}
\end{definition}

\begin{figure}[htb]
\centering
\includegraphics{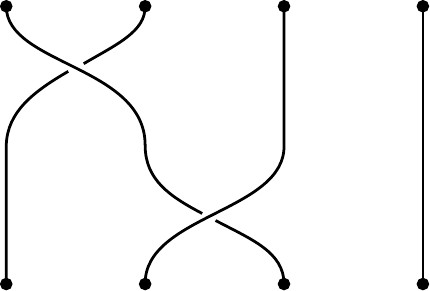}
\caption{A $4$-braid $\beta$}
\label{beta}
\end{figure}

\begin{definition}
Two $n$-braids $\alpha_0$ and $\alpha_1$ are equivalent if the maps defining them are isotopic through a family of maps with the same properties.
Note that two equivalent braids have the same associated permutation.
\end{definition}

\begin{definition}
The group $B_n$ of $n$-braids is the set of the equivalence classes of $n$-braids as defined above, with the operation of juxtaposition. Given two $n$-braids $\alpha$ and $\beta$, their juxtaposition is
\begin{align*}
\alpha \ast \beta^{(j)}_i (t) = \left\lbrace
\begin{array}{ll}
\alpha^{(j)}_i (2t) &\text{ if } 0 \leq t \leq \quotient{1}{2}\\
\beta^{(\tau(j))}_i (2t-1) &\text{ if } \quotient{1}{2} \leq t \leq 1
\end{array}
\right. \text{ for } i=1,2;\\
\alpha \ast \beta^{(j)}_3 (t) = \left\lbrace
\begin{array}{ll}
\quotient{1}{2} \ \alpha^{(j)}_3 (2t) &\text{ if } 0 \leq t \leq \quotient{1}{2}\\
\quotient{1}{2} +\quotient{1}{2}\ \beta^{(\tau(j))}_3 (2t-1) &\text{ if } \quotient{1}{2} \leq t \leq 1.
\end{array}
\right.
\end{align*}

\end{definition}

\begin{definition}
The closure of a braid is the link obtained by connecting the corresponding ends through concentric arcs (Figure \ref{fig_chiusura}).
\end{definition}

\begin{figure}[htb]
\centering
\includegraphics{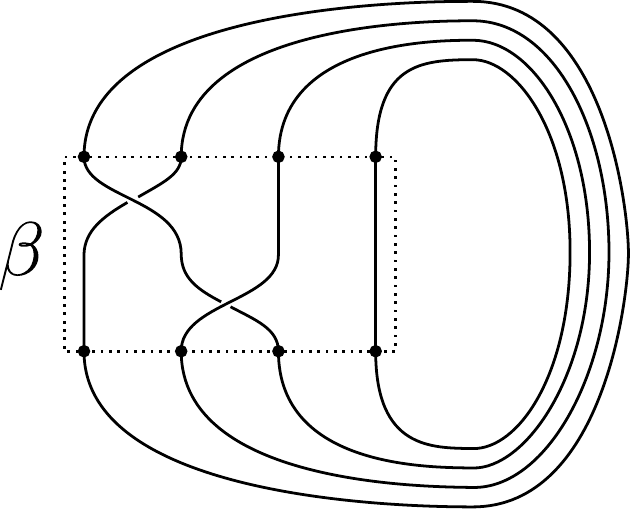}
\caption{The closure $\hat{\beta}$ of the braid $\beta$ of Figure \ref{beta}}
\label{fig_chiusura}
\end{figure}

\begin{fact}
The group of $n$-braids has the following presentation:
\begin{align*}
B_n=\left\langle \sigma_1, \dots \sigma_{n-1} | \sigma_i \sigma_j = \sigma_j \sigma_i \text{ if } |i-j| \geq 2,\ \sigma_i \sigma_{i+1} \sigma_i = \sigma_{i+1} \sigma_i \sigma_{i+1} \right\rangle,
\end{align*}
where the generator $\sigma_i$ is shown in Figure \ref{fig_sigmai} and its inverse is in Figure \ref{sigmaiinv}.
\end{fact}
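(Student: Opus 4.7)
The plan is to split the statement into three independent claims: (i) the $\sigma_i$'s generate $B_n$, (ii) the relations are satisfied in $B_n$, and (iii) no further relations are needed. The first two are essentially geometric and straightforward; the real work is in (iii).

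For generation, I would take a representative $\alpha$ of a braid class and, after a small isotopy, put it in general position with respect to the projection $\pi: \R^2\times[0,1]\to \R\times[0,1]$ that forgets the second coordinate. Standard transversality arguments let me assume that only finitely many double points appear under $\pi$ and that these occur at pairwise distinct heights $0<t_1<\dots<t_N<1$. Slicing $[0,1]$ by horizontal planes between consecutive $t_k$, the braid decomposes as a concatenation of elementary tangles, each of which has exactly one crossing between strings at positions $i$ and $i+1$, and is therefore $\sigma_i$ or $\sigma_i^{-1}$. The juxtaposition then reads off a word in the $\sigma_i^{\pm 1}$.

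Verifying the relations is done by inspection of pictures: if $|i-j|\geq 2$, the crossings involved in $\sigma_i$ and $\sigma_j$ use disjoint pairs of strings, so one can slide one crossing past the other in the time direction via an obvious isotopy; this yields $\sigma_i\sigma_j=\sigma_j\sigma_i$. The braid relation $\sigma_i\sigma_{i+1}\sigma_i=\sigma_{i+1}\sigma_i\sigma_{i+1}$ is the familiar three-string isotopy, easily drawn and checked.

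The hard part is completeness, i.e.\ showing that if two words $w$ and $w'$ in the $\sigma_i^{\pm 1}$ define isotopic braids, then $w$ can be transformed into $w'$ using only the given relations together with the free-group reductions $\sigma_i\sigma_i^{-1}=\sigma_i^{-1}\sigma_i=1$. My plan is to promote the ambient isotopy between the two braids to a finite sequence of \emph{local} moves on braid diagrams, in the spirit of Reidemeister's theorem: after a generic perturbation of the isotopy, the only nontrivial events on the projected diagram are (a) exchanges of heights of two crossings on disjoint pairs of strings, (b) triple points involving three consecutive strings, and (c) the creation/cancellation of a pair of opposite crossings on adjacent strings. Event (a) translates to the commutativity relation, (b) to the braid relation, and (c) to $\sigma_i\sigma_i^{-1}=1$. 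The delicate technical point is justifying the genericity step so that no other singularities arise — which requires a careful stratification of the space of projections — and I expect this to be the main obstacle, since it is where most textbooks either appeal to Artin's original argument via the faithful action of $B_n$ on $F_n$ or defer to a separate combinatorial proof. For this exposition I would likely cite Birman's treatment in \cite{B} for (iii) and emphasize the geometric content of (i) and (ii), since the Markov theorem that follows does not require re-deriving Artin's proof in full.
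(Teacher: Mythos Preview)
The paper does not prove this statement: it is presented as a \emph{Fact} (Artin's presentation of $B_n$) with no argument beyond the accompanying figures and the subsequent Remark that the braid relation corresponds to a Reidemeister $R_{III}$ move. Your proposal therefore goes well beyond the paper's treatment, which simply takes the presentation as known background for the proof of Markov's Theorem.

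That said, your outline is a sound sketch of the standard proof. The general-position argument for (i) and the pictorial verification of (ii) are correct and routine. For (iii) you correctly identify the genuine difficulty---controlling the singularities of a generic one-parameter family of projections---and you are right that this is exactly where most accounts defer to Artin's faithfulness argument or to a separate combinatorial lemma. Your final suggestion, to cite \cite{B} for part (iii) and concentrate on the geometric content of (i) and (ii), is in fact precisely the stance the paper takes: it invokes the result and moves on. So there is no discrepancy to flag; you have simply filled in what the paper deliberately omits.
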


\begin{remark}
The first kind of relation in the presentation of the $n$-braids group is easy to understand, and we can notice that the second kind of relation translates into a Reidemeister move $R_{III}$, as shown in figure \ref{R3}.
\end{remark}
\newpage 

\begin{figure}[htb]
\centering
\includegraphics{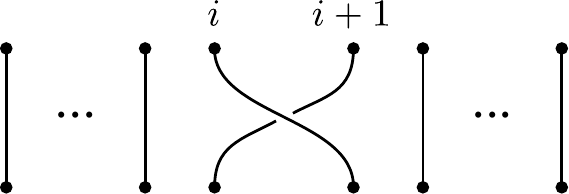}
\caption{The generator $\sigma_i$}
\label{fig_sigmai}
\end{figure}

\begin{figure}[htb]
\centering
\includegraphics{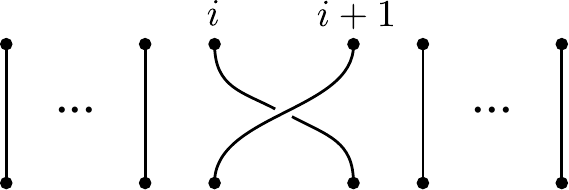}
\caption{The inverse $\sigma_i^{-1}$ of $\sigma_i$}
\label{sigmaiinv}
\end{figure}

\begin{figure}[htb]
\centering
\includegraphics{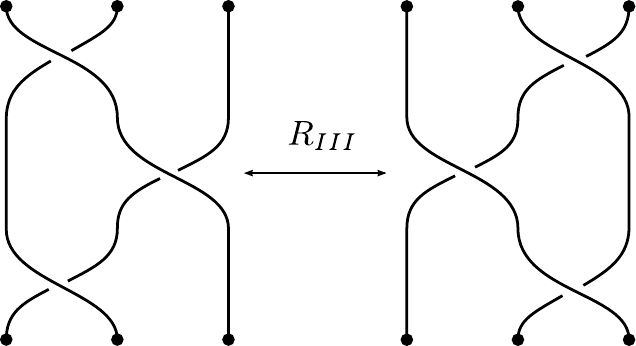}
\caption{The relation $\sigma_i \sigma_{i+1} \sigma_i = \sigma_{i+1} \sigma_i \sigma_{i+1}$}
\label{R3}
\end{figure}

\newpage
We now "algebraically" define two deformation moves that are crucial for Markov's Theorem.

\begin{definition}
Conjugation move (Figure \ref{fig_coniugio}):
\begin{align*}
B_m \ni c \mapsto w^{-1} c w \in B_m\text{, where }w \in B_m.
\end{align*}
\end{definition}

\begin{figure}[htb]
\centering
\includegraphics{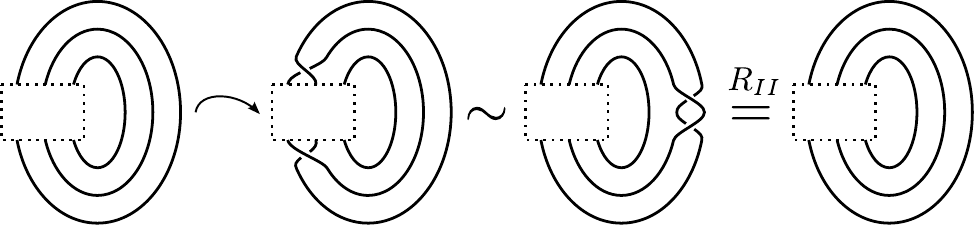}
\caption{Conjugation by $\sigma_i$ is trivial on the closure}
\label{fig_coniugio}
\end{figure}

\begin{definition}
Stabilization move:
\begin{align*}
B_m \ni c \mapsto c \sigma_m \in B_{m+1}
\end{align*}
where we can see $c$ in $B_{m+1}$ adding a string as in Figure \ref{fig_stab}.
\end{definition}

\begin{figure}[htb]
\centering
\includegraphics{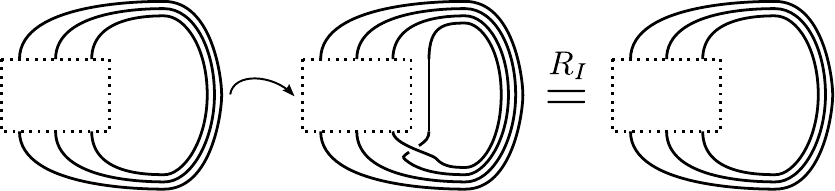}
\caption{Stabilization move is trivial on the closure}
\label{fig_stab}
\end{figure}

We can now state the main theorem:

\begin{theorem}[Markov]
Two braids whose closures are equivalent as links are related to each other by a sequence of conjugation and stabilization moves.
\end{theorem}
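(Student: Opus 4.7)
The plan is to split the proof into two directions. The ``if'' direction I would dispatch immediately from the pictures: in both cases the extra pieces contributed by a Markov move (the conjugating strands $w, w^{-1}$ for conjugation, and the single extra strand $\sigma_m$ for stabilization) can be pulled off the closure by an explicit planar isotopy, as suggested by Figures \ref{fig_coniugio} and \ref{fig_stab}. So the content of the theorem is the converse.

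For the converse my strategy is to combine Alexander's theorem (every link is the closure of some braid) with Reidemeister's theorem applied to diagrams. Given $\alpha \in B_m$ and $\beta \in B_n$ with $\hat\alpha$ equivalent to $\hat\beta$, I would pick braid diagrams $D_\alpha, D_\beta$ of their closures and connect them by a finite sequence of Reidemeister moves and planar isotopies $D_\alpha = D_0 \to D_1 \to \cdots \to D_k = D_\beta$. Intermediate diagrams $D_i$ need not be braid-shaped, so I would apply an Alexander-type braiding procedure to each $D_i$ to produce a braid $\gamma_i$ with $\hat{\gamma_i} = D_i$, and then show inductively that $\gamma_i$ and $\gamma_{i+1}$ are Markov equivalent. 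Transitivity of Markov equivalence then yields the result.

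The hardest step will be showing that a single Reidemeister move lifts to a finite sequence of conjugations and stabilizations. For this I anticipate needing an auxiliary ``Markov-for-Alexander'' lemma asserting that any two braidings of the same diagram, corresponding to different choices made by the braiding algorithm, are already Markov equivalent. With that in place each Reidemeister move admits a local analysis inside the braid: $R_{III}$ is absorbed by the defining relation $\sigma_i\sigma_{i+1}\sigma_i=\sigma_{i+1}\sigma_i\sigma_{i+1}$ (see the remark following the presentation of $B_n$); $R_{II}$ becomes either a cancellation $\sigma_i\sigma_i^{-1}=1$ or, up to reordering of strands, a conjugation; and $R_{I}$ changes the number of strands, which is exactly what the stabilization move was designed to realise. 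The genuinely delicate point, which I expect to occupy most of the work, is controlling the choices in the Alexander braiding uniformly along the whole Reidemeister sequence so that the local analyses glue into a global chain of conjugations and stabilizations connecting $\alpha$ to $\beta$.
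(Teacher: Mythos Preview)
Your plan follows a genuinely different route from the paper. The paper never touches Reidemeister moves or rebraiding of planar diagrams; it works in three dimensions with PL links around a fixed axis $r$. Each edge of a PL link carries a sign (according to whether it winds positively around $r$), and the \emph{height} $h(L)$ is the number of negative edges, so $h(L)=0$ characterises braid position. Link isotopy is generated by elementary triangle moves $\mathcal{E}_{a,c}^b$; the paper singles out special cases $\mathcal{R}$ (all sides positive) and $\mathcal{W}$ (a certain quadrilateral move), proves that $\mathcal{R}$ amounts to conjugation and that $\mathcal{W}$ decomposes into conjugations plus one stabilization, and then runs an induction on the maximal height occurring along a chain of $\mathcal{E}$-moves between $\hat\beta$ and $\hat\beta'$. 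Two technical lemmas, powered by an auxiliary ``sawtooth'' construction, strictly depress that maximum until the whole chain lives at height $0$ and therefore consists only of $\mathcal{R}$- and $\mathcal{W}$-moves.

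Your diagrammatic strategy can in principle be made rigorous (it is close in spirit to Traczyk's proof), but as written it has a real gap, and the local analysis you offer is not correct. An $R_I$ kink in the interior of a closed-braid diagram is \emph{not} a stabilization $c\mapsto c\sigma_m^{\pm1}$: stabilization by definition adds a crossing on the outermost strand, so a generic $R_I$ must first be transported there, which already costs an uncontrolled sequence of Markov moves. Likewise a generic $R_{II}$ does not reduce to $\sigma_i\sigma_i^{-1}=1$ or to a single conjugation once the surrounding diagram has been rebraided. All of the content therefore lands on your ``Markov-for-Alexander'' lemma, that any two Alexander braidings of the same (or Reidemeister-adjacent) diagrams are Markov equivalent; but this is essentially as hard as Markov's theorem itself, since the choices in the braiding algorithm---which backward arc to flip, over or under, and in what order---interact globally. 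You correctly flag this as the delicate point, yet supply no mechanism (a complexity function to induct on, a finite set of elementary rebraiding moves relating different choices, or a normal form) to resolve it. The paper's height $h$ together with the sawtooth machinery is precisely such a mechanism in the three-dimensional setting; your proposal still needs an analogue on the diagrammatic side before it becomes a proof.
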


\chapter{Tools}

In order to prove Markov's Theorem, we need to introduce some other deformation moves. Let $L$ be a PL link, with an edge $[a,c]$ (where in general $[a_1,\dots,a_n]$ is the convex hull of the points $a_1,\dots,a_n$). We will write $ab$ to indicate the edge $[a,b]$ oriented from $a$ to $b$.

In this chapter, we suppose that $L$ is a link, $r$ is an axis such that the projection $P$ of $L$ on the plane $r^{\perp}$ is a link diagram and $L$ inherits the orientation from the axis $r$, so all of its edges has a well defined sign. Note that $L$ is in braid position if and only if all of its edges are positive.

\begin{definition}
The move that generates PL link isotopy, given by
\begin{align*}
L \mapsto L \setminus [a,c] \cup [a,b] \cup [b,c],
\end{align*}
is called $\mathcal{E}_{a,c}^b$. This move is licit if $L \cap [a,b,c] = [a,c]$.
\end{definition}

\begin{figure}[htb]
\centering
\includegraphics{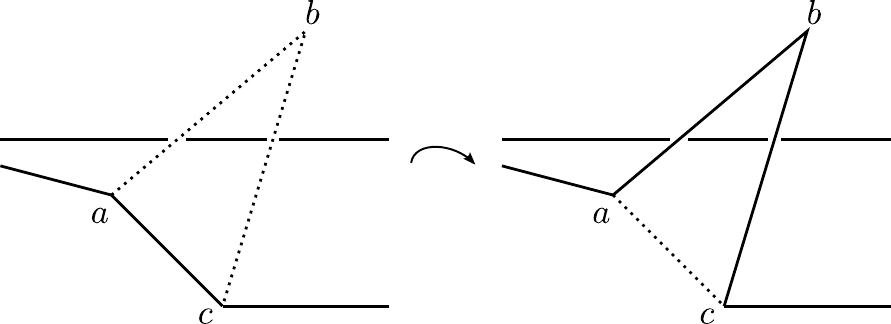}
\caption{The move $\mathcal{E}_{a,c}^b$}
\end{figure}

\begin{definition}
Given a negative edge $e=p_0 p_m$ of $L$, $m+1$ basepoints $p_0, p_1,\dots, p_m \in e$, such that $p_i,p_{i+1}$ is negative, and $m$ vertices $q_1, \dots, q_m \notin e$, the sawtooh move
\begin{align*}
\mathcal{S}_{p_0,\dots,p_m}^{q_1,\dots,q_m}= \mathcal{E}_{p_{m-1},p_m}^{q_m} \circ \dots \circ \mathcal{E}_{p_0,p_1}^{q_1}
\end{align*}
is the composition of type-$\mathcal{E}$ moves, each of which is licit when we operate it, and such that all $[p_i,q_{i+1}]$ and $[q_{i+1},p_{i+1}]$ are positive (Figure \ref{sawfigure}).
\end{definition}

\begin{figure}[htb]
\centering
\includegraphics{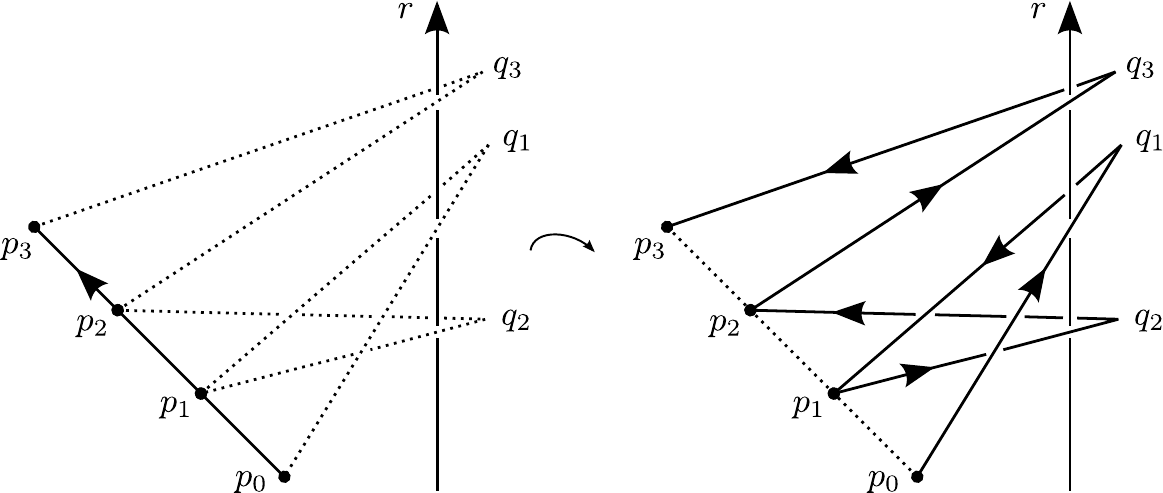}
\caption{The sawtooth move $\mathcal{S}_{p_0,\dots,p_3}^{q_1,\dots,q_3}$}
\label{sawfigure}
\end{figure}

\begin{lemma}[Existence of a Sawtooth]\label{sawexi}
Given a negative edge $e$ of $L$, there exists a sawtooth $\mathcal{S}$ on $e$. Moreover:
\begin{itemize}
\item[1.] if $\Omega$ is a convex set such that $\Omega \cap e$ is either empty or a vertex of $e$, then $\mathcal{S}$ can be chosen such as to avoid $\Omega$; formally, if $p_0,\dots,p_m$ are the basepoints and $q_1,\dots,q_m$ are the vertices of $\mathcal{S}$, then for every $i$
\begin{align*}
\Omega \cap [p_i,p_{i+1},q_{i+1}] \subseteq \Omega \cap e;
\end{align*}
\item[2.] if $e'$ is another edge of $L$ and $e \cap e'$ is one of the vertices of $e$, than $\mathcal{S}$ can be chosen such that (using the notation above) $[p_0,\dots,p_m,q_1,\dots,q_m] \cap e'$ is exactly that vertex.
\end{itemize}
\end{lemma}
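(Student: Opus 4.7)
The plan is to build the sawtooth iteratively: first fix a sufficiently fine partition $p_0, \dots, p_m$ of $e$, then, for each sub-edge in turn, locate $q_{i+1}$ inside an open set of admissible positions and apply the corresponding $\mathcal{E}$-move.

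The key preliminary observation is geometric. For a sub-edge $[p_i, p_{i+1}]$ of the negative edge $e$, both new edges $[p_i, q]$ and $[q, p_{i+1}]$ are positive precisely when the projection $P(q) \in r^\perp$ lies in a certain open angular sector around the projection of the axis $r$; this sector is non-empty exactly because $[p_i, p_{i+1}]$ is negative. The admissible set of $q \in \R^3$ is therefore open, non-empty, and contains points arbitrarily close to $[p_i, p_{i+1}]$.

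Next, I would invoke compactness to guarantee licitness. The closed set $L$ minus the interior of $e$ lies at positive distance from every compact subset of that interior. By taking the partition fine enough and each $q_{i+1}$ sufficiently close to $[p_i, p_{i+1}]$ inside the positivity region, the triangle $[p_i, p_{i+1}, q_{i+1}]$ is confined to a small neighborhood of $[p_i, p_{i+1}]$ and so avoids $L$ outside $[p_i, p_{i+1}]$. After the move $\mathcal{E}^{q_{i+1}}_{p_i, p_{i+1}}$ is applied, the newly introduced ``teeth'' themselves stay localized near $[p_i, p_{i+1}]$, so they do not obstruct later triangles either.

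For the additional conditions (1) and (2), I would impose further open constraints on each $q_{i+1}$. In (1), since $\Omega$ is convex and meets $e$ in at most one vertex, interior sub-edges lie at positive distance from $\Omega$ and small triangles avoid it automatically; for the at-most-one sub-edge containing a shared vertex, convexity of $\Omega$ ensures that the positivity region still contains points outside $\Omega$ arbitrarily close to $[p_i, p_{i+1}]$. Condition (2) is analogous: $e'$ and $e$ meet only at a vertex, so a small enough neighborhood of each sub-edge is disjoint from $e'$ except possibly at that vertex, and the positivity region intersected with this neighborhood is still open and non-empty. The main obstacle I expect is precisely verifying that, after intersecting the positivity region with the licitness, $\Omega$-avoidance, and $e'$-avoidance constraints, the admissible set for $q_{i+1}$ remains non-empty at every step. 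This follows from the openness of each condition combined with the freedom to refine the partition: by making each sub-edge short enough, the geometric obstructions shrink to arbitrarily small neighborhoods of the endpoints, leaving the positivity region essentially unconstrained.
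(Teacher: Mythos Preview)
Your argument has a genuine geometric gap: the claim that the positivity region for $q$ ``contains points arbitrarily close to $[p_i,p_{i+1}]$'' is false, and the whole small-triangle strategy collapses with it. Writing the positivity condition as $P(p_i)\times P(q)>0$ and $P(q)\times P(p_{i+1})>0$ (two-dimensional cross product in $r^\perp$, with the origin being the projection of $r$), one sees that for a negative sub-edge the intersection of these two half-planes lies entirely on the \emph{opposite side of the axis} from $P([p_i,p_{i+1}])$. Hence every admissible $q$ projects away from the sub-edge, and the triangle $[p_i,p_{i+1},q]$ is never contained in a small neighbourhood of $[p_i,p_{i+1}]$: it is always a long thin wedge reaching across to (at least) the axis $r$. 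Refining the partition does not help; it only makes the wedge thinner, not shorter.

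Once the triangle is forced to be long, your compactness argument for licitness breaks down: the wedge may well cross other parts of $L$, and in particular near a crossing point of the diagram over $e$ it \emph{must} come close to the other strand. This is exactly the difficulty the paper's proof is organised around. There one takes $q$ just beyond the point $\tilde q$ where a nearly-axis-parallel half-line from the crossing point meets $r$, and the delicate part is choosing the side (positive or negative half-space) and the small angle so that this long thin triangle threads past the finitely many points of $L\cap\pi$. The treatment of conditions 1 and 2 likewise exploits this freedom in choosing the side of the wedge, not smallness of the triangle. Your outline would be repairable if you replaced ``$q$ close to $[p_i,p_{i+1}]$'' by ``$q$ close to $r$'' and then supplied the crossing-point analysis, but as written the central step does not go through.
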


\begin{proof}

We first consider the case where the link $L$ has a crossing point over the edge $e$. Let $c \in e$ and $d$ on another edge $f$ be the two points that project to the same point on $r^{\perp}$. We will assume that $c \in r^{\perp}$, for simplicity reasons. Let $\pi$ be the plane containing $c$ and $r$ and let $r'$ be the parallel to $r$ containing $c$ (Figure \ref{fig_saw1}).


\begin{figure}[htb]
\centering
\includegraphics{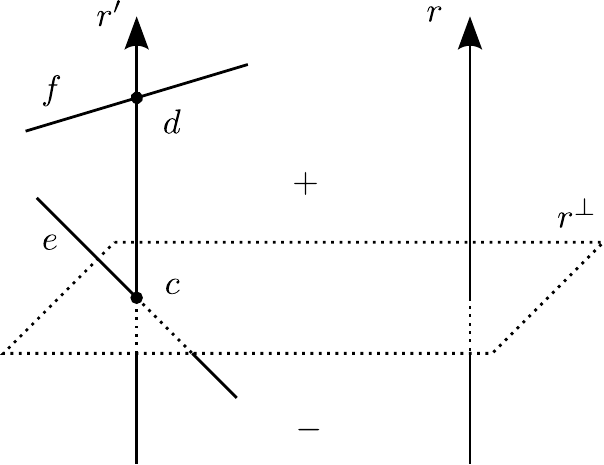}
\caption{Near a crossing point}
\label{fig_saw1}
\end{figure}

The link $L$ intersects $\pi$ in a finite number of points, because it is compact and in general position with respect to $r^{\perp}$. Hence $r' \cap L$ only contains $c$ and $d$, while $f \cap \pi=\{d\}$, otherwhise the rotation axis $r$ would not induce any sign on it. The plane $r^{\perp}$ divides the space into two semi-spaces (positive and negative), according to the orientation on $r$. So, in one of the two subspaces, we can draw a half-line lying on the plane $\pi$, starting from $c$, that does not intersect $L$ in any other point: if suffices that the angle it forms with $r'$ is small enough (and different from $0$). Then the half-line is almost parallel to the line $r'$. Let us take the points $\tilde{q}$ ad $q$ on this half-line, such that $\tilde{q}$ is the intersection between it and $r$, while $q$ does not belong to the line segment $c \tilde{q}$ (Figure \ref{fig_saw2}). 


\begin{figure}[htb]
\centering
\includegraphics{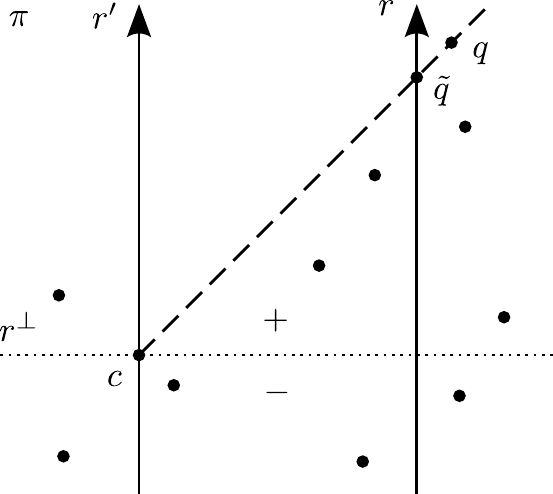}
\caption{Construction of $\tilde{q}$ and $q$}
\label{fig_saw2}
\end{figure}

Let $p$ and $p'$ be two points on $e$ such that $pp'$ is negative and $c$ is internal to it. We can choose $p$ and $p'$ close enough to $c$, and $q$ close enough to $\tilde{q}$, such that $[p,q,p'] \cap L = [p,p']$. So the move $\mathcal{E}_{p,p'}^q$ is licit and the edges $pq$ and $qp'$ are positive; it will be a tooth in our sawtooth (Figure \ref{fig_saw3}).


\begin{figure}[htb]
\centering
\includegraphics{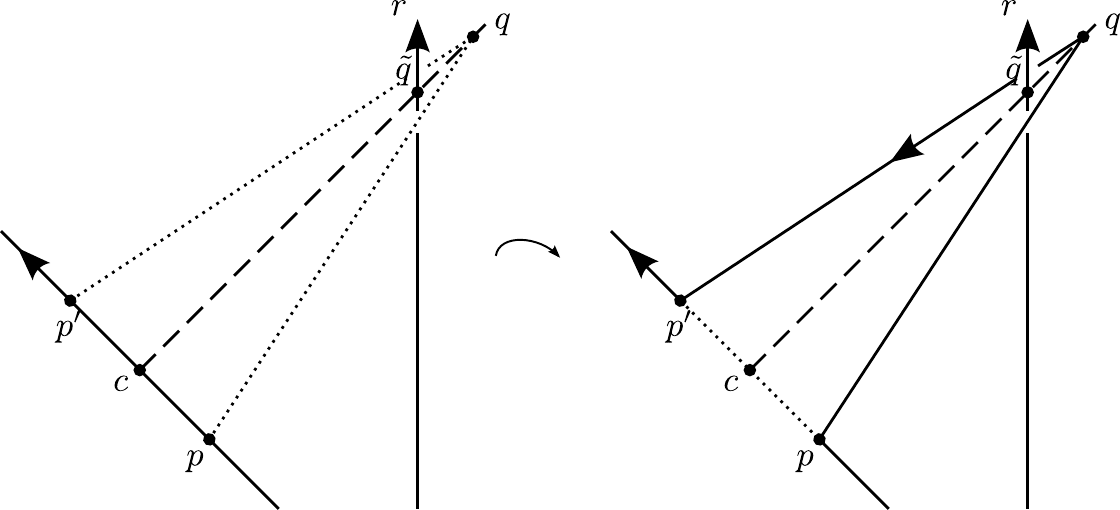}
\caption{Construction of a tooth}
\label{fig_saw3}
\end{figure}

We now consider the case where $e=[p_0,p_1]$ has no crossing points. Let $\Xi$ be the strip defined by the parallels to $r$ that intersect $e$. Then $\Xi \cap L$ only contains $e$, hence we can rotate the strip around $e$ by a sufficiently small angle $\phi$, so that the image $\Xi'$ of the strip $\Xi$ intersects $r$ in a point $\tilde{q_1}$, and still only intersects $L$ in $e$. Let us consider the triangle whose basis is $e$ and whose vertex is a point $q_1$ close to $\tilde{q_1}$, belonging to the same half-strip and such that $p_0 q_1$ and $q_1 p_1$ are positive line segments. So the move $\mathcal{E}_{p_0,p_1}^{q_1}$ is licit and is a sawtooth. (Figure \ref{saw5}).


\begin{figure}[htb]
\centering
\includegraphics{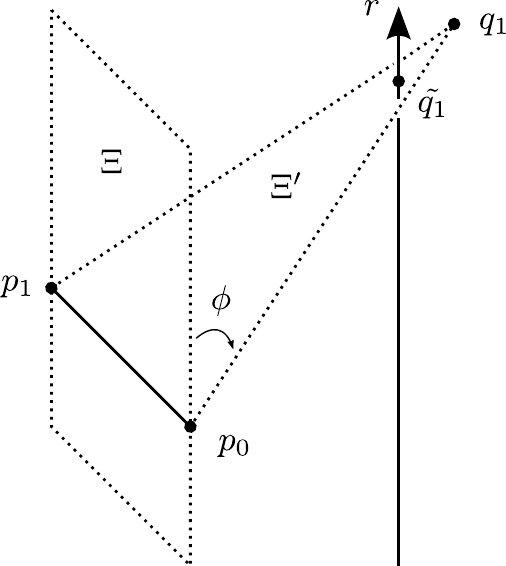}
\caption{No crossing points}
\label{saw5}
\end{figure}
\newpage
Last, we adapt the proof in order to show $1.$ and $2.$:


\begin{itemize}
\item[1.] If $\Omega$ is as stated, $e$ has a crossing point $c$ and $r'$ is as above, then $\Omega$ can intersect only one of the two half-lines that $c$ divides $r'$ into: if it did not, being a convex set it would intersect also $e$ in its internal point $c$, a contradiction. Then, if $\Omega$ does not intersect the positive (resp. negative) half-line, we can choose $c \tilde{q}$ to be in the positive (resp. negative) half-space and the proof above continues to work. Also, if there are not crossing points, it suffices to take one of the half-strips of $\Xi$ that does not intersect $\Omega$ (it is possible thanks to the convexity of $\Omega$) and rotate it as above in order to find an intersection point $\tilde{q_1}$ with $r$. If the rotation angle is small enough, the half-strip still does not intersect $\Omega$.
\item[2.]Let $\{a\}=e \cap e'$. We can still consider the strip $\Xi$ over the edge $e$ defined as above. Since the link is in general position with respect to $r^{\perp}$, $\Xi$ only intersects $e'$ in $a$. Now we can execute all the steps above taking all the vertices of the sawtooth in the same half-space (no matter which) and far from $r^{\perp}$ and $e'$, so that near $e'$ the convex hull of the sawtooth is indistinguishable from its projection on the strip $\Xi$. Then it only intersects $e'$ in $a$, too.

\end{itemize}
This concludes the proof.
\end{proof}

Some other useful deformation moves are the following.

\begin{definition}
The move $\mathcal{R}_{a,c}^b$ is given by $\mathcal{E}_{a,c}^b$ with the edges $ac$, $ab$ and $bc$ all positive.
\end{definition}

\begin{definition}
The move $\mathcal{W}_{a,d}^{b,c}$ is defined when $ab$, $bc$, $cd$, $ad$, $ca$ and $db$ are all positive and is given by the composition:
\begin{align*}
\mathcal{W}_{a,d}^{b,c} = \mathcal{E}_{b,d}^c \circ \mathcal{E}_{a,d}^b
\end{align*}
where the two type-$\mathcal{E}$ moves are licit.
\end{definition}

\begin{remark}
We can obtain the same link using another definition of $\mathcal{W}_{a,d}^{b,c}$, namely:
\begin{align*}
\mathcal{E}_{a,c}^b \circ \mathcal{E}_{a,d}^c
\end{align*}
\end{remark}

\begin{figure}[htb]
\centering
\includegraphics{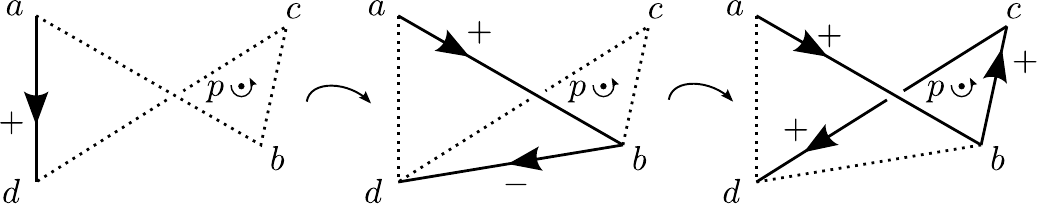}
\caption{Crossing type-$\mathcal{W}$ move}
\label{mosseWc}
\end{figure}

\begin{figure}[htb]
\centering
\includegraphics{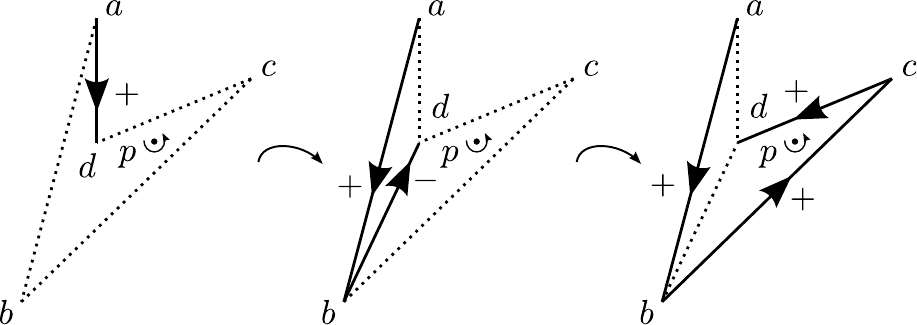}
\caption{Non-crossing type-$\mathcal{W}$ move}
\label{mosseWn}
\end{figure}

\begin{remark}
There are two possible type-$\mathcal{W}$ moves (Figures \ref{mosseWc} and \ref{mosseWn}). We can see this using the diagram obtained by projecting the link and all the involved line segments on $r^{\perp}$. Let $p$ be the intersection between $r$ and $r^{\perp}$. The point $b$ belongs to one of the two half-planes delimited by the line containing $a$ and $d$; if this is the half-plane containing $p$, then we get the crossing type-$\mathcal{W}$ move, otherwise we get the non-crossing type-$\mathcal{W}$ move.
\end{remark}

\begin{proposition}\label{propA}
Two braids whose closures are connected by a type-$\mathcal{R}$ move are conjugate to each other.
\end{proposition}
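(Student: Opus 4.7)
I will show that a type-$\mathcal{R}$ move is a local PL isotopy of $L$ which preserves the associated braid up to a change of cutting disk, and then use the fact that changing the cutting disk of the braid closure amounts to conjugation in $B_m$.

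The crucial step is to analyse the angular geometry of $[a,b,c]$. Each positive edge is a chord missing $r$, so its oriented angular change around $r$ lies in $(0,\pi)$; the oriented changes along the loop $a\to b\to c\to a$ must sum to an integer multiple of $2\pi$ lying in $(-\pi,2\pi)$, hence they sum to $0$. So the triangle does not wind around $r$. Assuming $\theta(a)<\theta(b)<\theta(c)$, the projection of $[a,b,c]$ lies in the angular sector of width $\theta(c)-\theta(a)<\pi$, and for every $q$ in the interior of $[a,b,c]$ one has $\theta(a)<\theta(q)<\theta(c)$ strictly: indeed, were equality to hold at $a$ for some interior $q$, the vertex $a$ would be a non-trivial convex combination of $p=r\cap r^\perp$, $b$ and $c$, forcing $\theta(a)\in(\theta(b),\theta(c))$; the argument at $c$ is symmetric. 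Therefore a half-plane $\Pi$ bounded by $r$ can be chosen so that its trace on $r^\perp$ avoids $[a,b,c]$.

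Taking $\Pi$ as the cutting disk I unroll the solid torus around $r$ into a braid box, obtaining braids $\beta,\beta'$ associated to $L$ and $L'$. I show $\beta=\beta'$ by an explicit isotopy from $L$ to $L'$ through links in braid position, supported in $[a,b,c]$ and disjoint from $\Pi$: choose a path $t\mapsto b_t$ in the interior of $[a,b,c]$ from an interior point of $[a,c]$ to $b$ along which $\theta(b_t)$ stays strictly between $\theta(a)$ and $\theta(c)$. Then the edges $[a,b_t]$ and $[b_t,c]$ are positive, licitness of the $\mathcal{R}$-move keeps the deformed link $L_t$ embedded, and $L_t\cap\Pi=\emptyset$ throughout, so the braid element of $L_t$ relative to $\Pi$ is constant in $t$, yielding $\beta=\beta'$. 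Finally, for any other cutting disk $\Pi_0$, the braids $\alpha,\alpha'$ of $L,L'$ relative to $\Pi_0$ are conjugates of $\beta,\beta'$ by some braids $u,u'\in B_m$ recording the strands of $L$, respectively $L'$, caught between $\Pi$ and $\Pi_0$; combined with $\beta=\beta'$ this gives $\alpha'=(u'u^{-1})^{-1}\alpha(u'u^{-1})$, the desired conjugacy.

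The main technical content lies in the isotopy step and rests on the strict inequalities $\theta(a)<\theta(q)<\theta(c)$ on the interior of the triangle obtained in the first step; without them the intermediate states $L_t$ need not be valid braids. Once this geometric control is in hand, the passage to an arbitrary cutting disk via conjugation is routine.
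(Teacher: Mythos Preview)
Your argument is correct and takes a genuinely different route from the paper's. The paper first factors $\mathcal{R}_{a,c}^b$ into arbitrarily small type-$\mathcal{R}$ moves via repeated barycentric subdivision of $[a,b,c]$, and then performs a case analysis on what a small triangle can contain (a single vertex, a single crossing, a single edge, or an edge together with the fixed half-line $\theta=\theta_0$); in the first three cases the braid word is literally unchanged, while in the last a single generator $\sigma_i^{\pm1}$ migrates from one end of the word to the other, which is conjugation. Your approach bypasses both the subdivision and the case analysis: the angular computation showing the oriented boundary of $[a,b,c]$ has winding number~$0$ around $r$ lets you choose a cutting half-plane $\Pi$ disjoint from the triangle, and then a straight-line isotopy of the apex through the interior of $[a,b,c]$ keeps the link in braid position with fixed $\Pi$-trace, so the braid read off at $\Pi$ is constant; finally you invoke the standard fact that changing the cutting half-plane conjugates. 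What you gain is a cleaner, coordinate-free argument with no diagrammatic bookkeeping; what the paper's version gains is an explicit description of the conjugating element (a product of the $\sigma_i^{\pm1}$ corresponding to the crossings swept by the half-line $\theta=\theta_0$ across $[a,b,c]$), and the subdivision trick is reused later in the paper. Two small points: your justification of the strict inequalities $\theta(a)<\theta(q)<\theta(c)$ is rather compressed---it is cleaner to note that the angular sector $\{\theta(a)\le\theta\le\theta(c)\}$ is convex (width $<\pi$) and contains the three projected vertices, hence the whole projected triangle, while the extremal rays meet the boundary of that triangle only at $P(a)$ and $P(c)$---and the final displayed conjugacy should read $\alpha'=(u^{-1}u')^{-1}\alpha(u^{-1}u')$ rather than with $u'u^{-1}$.
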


\begin{proof}
Let the $\mathcal{R}$ move be some $\mathcal{R}_{a,c}^b$.

First, we can notice that, up to subdivision, we can consider take the triangle $[a,b,c]$ to be arbitrarily small. Indeed, if $x \in [a,c]$, $y \in [a,b]$, $z \in [b,c]$ the following equality holds (Figure \ref{fig_suddivisione}):

\begin{align*}
\mathcal{R}_{a,c}^b = (\mathcal{R}_{b,c}^z)^{-1} \circ (\mathcal{R}_{a,b}^y)^{-1} \circ \mathcal{R}_{y,z}^b \circ (\mathcal{R}_{y,z}^x)^{-1} \circ \mathcal{R}_{x,c}^z \circ \mathcal{R}_{a,x}^y \circ \mathcal{R}_{a,c}^x.
\end{align*}

\begin{figure}[htb]
\centering
\includegraphics{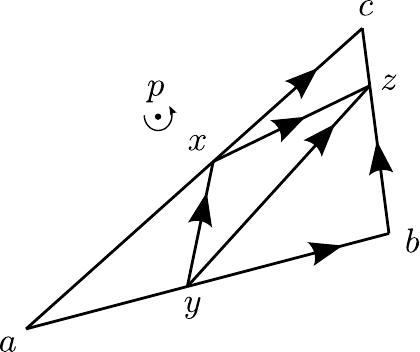}
\caption{Factorization of a type-$\mathcal{R}$ move}
\label{fig_suddivisione}
\end{figure}

So, after a finite number of subdivisions, each obtained by some type-$\mathcal{R}$ moves, we can separately work on smaller triangles.

From this moment on, we will only consider braid position links. Let us fix an angle $\theta_0$ and consequently a half-line $\theta=\theta_0$, which is the starting point of every braid position diagram.

Since the triangle $[a,b,c]$ involved in the type-$\mathcal{R}$ move is small, we may assume that only one of the following happens, explained in Figure \ref{staminchia}:

\begin{itemize}
\item the internal part of $[a,b,c]$ only contains one vertex of $L$;
\item the internal part of $[a,b,c]$ only contains one crossing of $L$;
\item only an edge of $L$ passes through the internal part of $[a,b,c]$, while the half-line $\theta=\theta_0$ does not;
\item an edge of $L$ and the half-line $\theta=\theta_0$ pass through the internal part of $[a,b,c]$.
\end{itemize}

\begin{figure}[htb]
\centering
\subfloat{\includegraphics{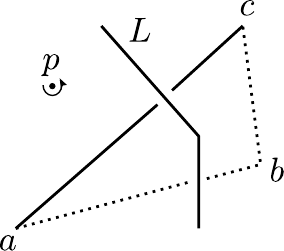}}\qquad
\subfloat{\includegraphics{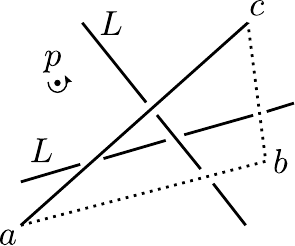}}\qquad
\subfloat{\includegraphics{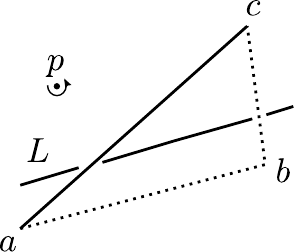}}\qquad
\subfloat{\includegraphics{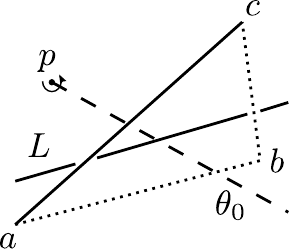}}
\caption{The $4$ cases for the type-$\mathcal{R}$}
\label{staminchia}
\end{figure}

In the first case, we simply have an equivalence of braids; to see that, it suffices to read the braids corresponding to $L$ and to $\mathcal{R}(L)$ (that is the link $L$ after the move $\mathcal{R}$), starting from $\theta=\theta_0$: the only difference between the two of them may be near the edge $[a,c]$ in $L$ and the edges $[a,b]$ and $[b,c]$ in $\mathcal{R}(L)$. In the diagram, the link intersects the boundary of the triangle $[a,b,c]$ twice, and it always passes over the triangle or under it. Reading the braid as an element of $B_n$, we find exactly the same generators, as in Figure \ref{propatreccia}.

\begin{figure}[htb]
\centering
\includegraphics{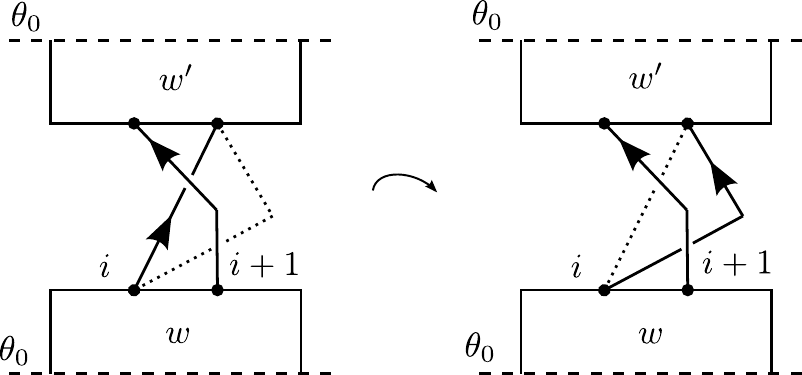}
\caption{The equivalence of $L$ and $\mathcal{R}(L)$}
\label{propatreccia}
\end{figure}

The second and third cases are similar: we have another equivalence of braids. Let us focus on the fourth case. We may assume that the half-line $\theta=\theta_0$ also crosses $L$ in the internal part of $[a,b,c]$; otherwise, up to subdivision, we can get the third case. We can notice that both the edge of $L$ and the half-line $\theta=\theta_0$ intersect twice the boundary of $[a,b,c]$, obtaining four different points (otherwise the edge of $L$ would have no sign) as showed in Figure \ref{hofame}.

\begin{figure}[htb]
\centering
\includegraphics{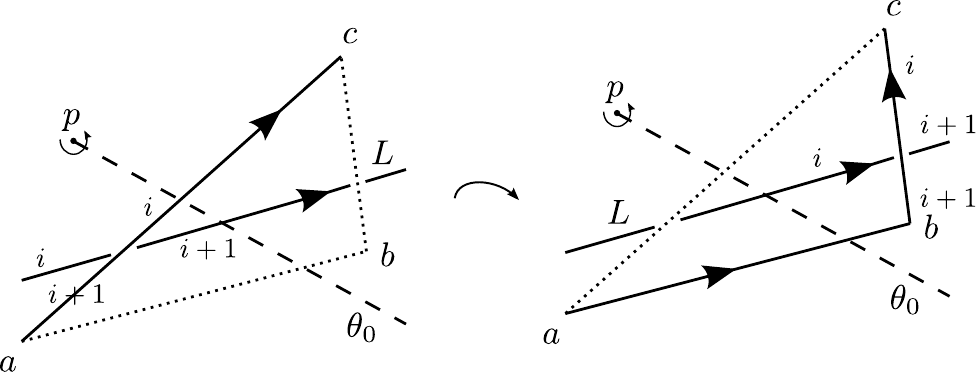}
\caption{Type-$\mathcal{R}$ move, fourth case}
\label{hofame}
\end{figure}

So, the braids we get before and after doing the move $\mathcal{R}$ can be (in any order): $w \sigma_i$ and $\sigma_i w$ or $w \sigma_i^{-1}$ and $\sigma_i^{-1} w$ (Figure \ref{tantafame}).

\begin{figure}[htb]
\centering
\includegraphics{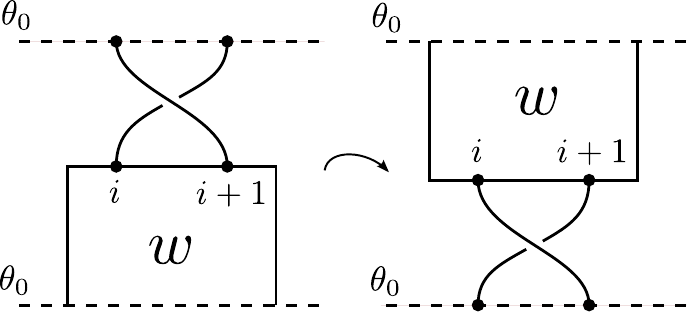}
\caption{Correspondent braid diagram}
\label{tantafame}
\end{figure}

In each case, they are conjugate in $B_n$.
\end{proof}

\begin{definition}
The move $\mathcal{W}_{a,d}^{b,c}$ is said to be empty if $[a,b,c,d] \cap L = [a,d]$.
\end{definition}

\begin{proposition}\label{propB}
A type-$\mathcal{W}$ move factors through two type-$\mathcal{R}$ moves and an empty type-$\mathcal{W}$ move.
\end{proposition}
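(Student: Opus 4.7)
Proof plan: The strategy is to decompose $\mathcal{W}_{a,d}^{b,c}$ as a composition $\mathcal{R}_2 \circ \mathcal{R}_1 \circ \mathcal{W}'$, where $\mathcal{W}'$ is an empty type-$\mathcal{W}$ move on a slightly perturbed sub-quadrilateral and the two type-$\mathcal{R}$ moves correct the position of the middle vertex of the resulting zigzag path. The point is that although the canonical decomposition $\mathcal{E}_{b,d}^c \circ \mathcal{E}_{a,d}^b$ of the general $\mathcal{W}$-move may pass through a configuration in which the intermediate diagonal edge is not positive, we can route the same change via an empty $\mathcal{W}$-move on a nearby sub-quadrilateral and absorb the difference into two $\mathcal{R}$-moves that are safely in braid position.

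Concretely, I would pick an auxiliary point $b'$ in the plane of $[a,b,c,d]$, close to $b$ but strictly off the segment $[a,b]$ and on the interior side of the quadrilateral, so that the sub-quadrilateral $[a,b',c,d]$ sits inside $[a,b,c,d]$. Tracing through the licitness of the canonical decomposition, one obtains $L \cap [a,b,c,d] = [a,d]$, hence the same holds for the sub-quadrilateral and the move $\mathcal{W}' := \mathcal{W}_{a,d}^{b',c}$ is empty. After performing it, the link contains $[a,b'] \cup [b',c] \cup [c,d]$ in place of $[a,d]$. I would then apply $\mathcal{R}_1 := \mathcal{R}_{a,b'}^b$, splitting $[a,b']$ into $[a,b] \cup [b,b']$, and then $\mathcal{R}_2$, namely the inverse of $\mathcal{R}_{b,c}^{b'}$, merging $[b,b'] \cup [b',c]$ back into the single edge $[b,c]$. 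The two triangles $[a,b,b']$ and $[b,b',c]$ are thin slivers close to the segments $[a,b]$ and $[b,c]$ respectively; for $b'$ close enough to $b$, compactness and general position of $L$ make them empty of the current link, and the positivity of $[a,b]$ and $[b,c]$ in the original $\mathcal{W}$-move data propagates to all the new edges $[a,b]$, $[b,b']$, $[b',c]$, $[b,c]$ used in the two $\mathcal{R}$-moves.

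The main obstacle is verifying that a single generic choice of $b'$ can simultaneously ensure the three emptiness conditions (for $[a,b',c,d]$, $[a,b,b']$, and $[b,b',c]$) together with the positivity of every involved edge, while keeping the intermediate link in braid position throughout. Each of these is an open condition on $b'$ in the plane of the quadrilateral, and all are satisfied in the limit $b' \to b$ (modulo the nondegeneracy of the two correction triangles), so a generic $b'$ sufficiently close to $b$, suitably offset from the line through $a$ and $b$, works; the same construction with $c'$ in place of $b'$ handles the symmetric case if the positivity data forces the adjustment on the other side.
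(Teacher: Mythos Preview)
There is a genuine gap, and it lies in the sentence ``Tracing through the licitness of the canonical decomposition, one obtains $L \cap [a,b,c,d] = [a,d]$.'' This is precisely what is \emph{not} true in general, and it is the whole content of the proposition. The four points $a,b,c,d$ live in $3$-space; generically their convex hull $[a,b,c,d]$ is a solid tetrahedron, while the licitness of $\mathcal{W}_{a,d}^{b,c}=\mathcal{E}_{b,d}^{c}\circ\mathcal{E}_{a,d}^{b}$ only tells you that $L$ meets the two triangular \emph{faces} $[a,b,d]$ and $[b,c,d]$ in the expected segments. Strands of $L$ may pass freely through the interior of the tetrahedron without obstructing either $\mathcal{E}$-move, and in that situation the original $\mathcal{W}$-move is licit but not empty. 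Your perturbation $b\mapsto b'$ near $b$ moves the tetrahedron only slightly, so $[a,b',c,d]$ still contains essentially the same interior region and the same obstructing strands; nothing forces $\mathcal{W}_{a,d}^{b',c}$ to be empty. The two $\mathcal{R}$-corrections at the end are fine, but they sit on top of a $\mathcal{W}'$ that has not been shown to be empty.

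The paper's argument fixes this by shrinking the tetrahedron in a different direction: one chooses $x\in[a,d]$ close to $d$, so that the sub-tetrahedron $[x,d,b,c]$ limits, as $x\to d$, to the single triangle $[b,c,d]$. That triangle \emph{is} controlled by the licitness hypothesis (it meets $L\setminus[a,d]$ nowhere), and compactness of $L$ then gives a neighbourhood $U\ni d$ on $[a,d]$ with $[x,d,b,c]\cap L=[x,d]$ for every $x\in U$. One then factors
\[
\mathcal{W}_{a,d}^{b,c}=(\mathcal{R}_{a,b}^{x})^{-1}\circ \mathcal{W}_{x,d}^{b,c}\circ \mathcal{R}_{a,d}^{x},
\]
where the middle move is now genuinely empty and the two flanking moves are type-$\mathcal{R}$ because $ax$, $xd$ inherit the sign of $ad$ and $xb$ inherits the sign of $db$. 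The key geometric idea you are missing is that to achieve emptiness one must collapse the convex hull onto one of the two faces already known to be clear of $L$, not merely jiggle a vertex.
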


\begin{proof}
Let us write $\mathcal{W}_{a,d}^{b,c} = \mathcal{E}_{d,b}^c \circ \mathcal{E}_{a,d}^b$, with $ad$, $bc$, $cd$, $ab$ and $db$ all positive. We could also write $\mathcal{W}_{a,d}^{b,c} = \mathcal{E}_{c,a}^b \circ \mathcal{E}_{a,d}^c$, but the proof would be the same, role-reversing $a$ and $d$, and $b$ and $c$. We will only consider the first case. By hypothesis:
\begin{align*}
L \cap [a,d,b]=[a,d]\\
L \cap [b,c,d]= \{ b \}.
\end{align*}

There exists a neighbourhood $U$ of $d$ in $[a,d]$ such that for every $x \in U$:
\begin{itemize}
\item $[x,d,b,c] \cap L=[x,d]$, since $L$ is compact;
\item $xb$ has the same sign as $db$.
\end{itemize}
Let us take $x$ in this neighbourhood. So the move factors as follows (Figure \ref{prop_b}):
\begin{align*}
\mathcal{W}_{a,d}^{b,c} = (\mathcal{E}_{a,b}^x)^{-1} \circ \mathcal{W}_{x,d}^{b,c} \circ \mathcal{E}_{a,d}^x.
\end{align*}

\begin{figure}[htb]
\centering
\includegraphics{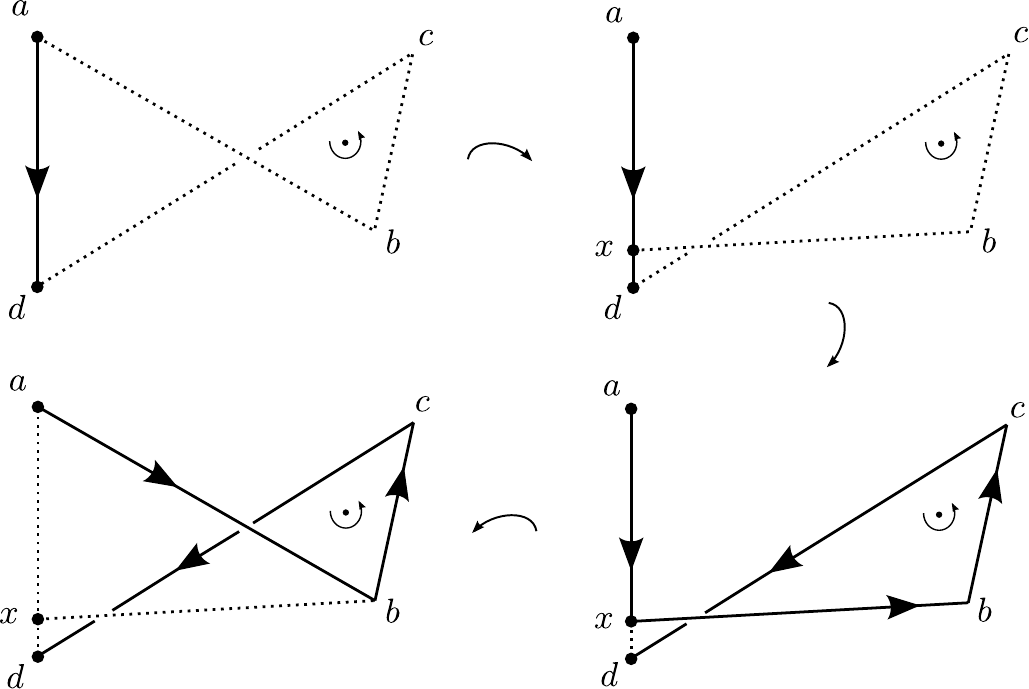}
\caption{Factorization of $\mathcal{W}_{a,d}^{b,c}$}
\label{prop_b}
\end{figure}

All the involved edges are positive:
\begin{itemize}
\item $ad$, $ab$, $bc$ and $cd$ by hypothesis;
\item $ax$ and $xd$ have the same sign as $ad$, since $x \in [a,d]$;
\item $xb$ has the same sign as $db$ by the choice of $x$.
\end{itemize}

So the two type-$\mathcal{E}$ moves are actually of type-$\mathcal{R}$ and the type-$\mathcal{W}$ move is well defined; moreover it is empty, since $[x,d,b,c] \cap L=[x,d]$.
\end{proof}

Recall that $P$ is the projection on $r^{\perp}$.
\begin{definition}
The move $\mathcal{W}_{a,d}^{b,c}$ is said to be really empty if $P([a,b,c,d]) \cap P(L)= P([a,d])$.
\end{definition}

\begin{proposition}\label{propC}
An empty type-$\mathcal{W}$ move can be decomposed into some type- $\mathcal{R}$ moves and one really empty crossing type-$\mathcal{W}$ move.
\end{proposition}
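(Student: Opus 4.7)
The obstruction to real emptiness of the empty move $\mathcal{W}_{a,d}^{b,c}$ is that finitely many edges of $L$ may have projections entering $P([a,b,c,d])\setminus P([a,d])$ without their bodies meeting the tetrahedron $[a,b,c,d]$. My plan is to use type-$\mathcal{R}$ moves to push each such interfering edge out of $P([a,b,c,d])$, then perform the now really empty crossing $\mathcal{W}_{a,d}^{b,c}$, and finally reverse the clearing moves (themselves type-$\mathcal{R}$) away from the support of the $\mathcal{W}$.

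First I would enumerate the finitely many interfering edges $e_1^*,\dots,e_k^*$ of $L$; since $L\cap[a,b,c,d]=[a,d]$ by hypothesis, each such $e_i^*$ lies strictly on one side of the tetrahedron in the direction of $r$ at every point of its projection inside $P([a,b,c,d])$. For a positive $e_i^*$ I would pick a vertex $u$ just outside $P([a,b,c,d])$ and close to $e_i^*$, arranged so that the triangle built on $e_i^*$ with apex $u$ has all three edges positive, has interior disjoint from $L\setminus e_i^*$, and has its two new edges projecting outside the interior of $P([a,b,c,d])$. Such a $u$ exists by compactness of $L$ and its general position with respect to $r^\perp$, and the corresponding $\mathcal{R}$ move replaces $e_i^*$ by two positive edges that no longer obstruct the $\mathcal{W}$-region. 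For a negative $e_i^*$ I would first apply Lemma \ref{sawexi} with $\Omega=[a,b,c,d]$ (the avoidance clause is allowable precisely because $e_i^*\cap[a,b,c,d]=\emptyset$) to replace $e_i^*$ by a chain of positive edges avoiding the tetrahedron, and then push each of these positive pieces as above.

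Once all interfering edges have been cleared in this way, the condition $P(L)\cap P([a,b,c,d])=P([a,d])$ holds, so $\mathcal{W}_{a,d}^{b,c}$ is now really empty; moreover it is still crossing, because the four vertices $a,b,c,d$ and the axis projection $p$ have not been touched during the clearing. Performing this really empty crossing $\mathcal{W}$ and then reversing all clearing $\mathcal{R}$-moves gives the factorization claimed.

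The hardest step will be the clearing itself: each apex $u$ must be chosen simultaneously to keep the three edges of the push-off triangle positive, to keep its interior disjoint from $L$ minus the moved edge, and to actually move the projection of that edge out of $P([a,b,c,d])$. Compactness and general position supply the geometric room for the positive case, and clause~1 of the Sawtooth Existence Lemma, which lets one avoid a prescribed convex obstacle, is exactly what is needed to reduce the negative-edge case to the positive one without ever cutting into the tetrahedron $[a,b,c,d]$.
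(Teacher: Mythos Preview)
There are two genuine gaps. First, you never handle the non-crossing case: the hypothesis is only that $\mathcal{W}_{a,d}^{b,c}$ is empty, and an empty $\mathcal{W}$ may well be non-crossing. Your sentence ``it is still crossing, because the four vertices $a,b,c,d$ and the axis projection $p$ have not been touched'' presupposes that the move was crossing to begin with. The paper deals with this first, by choosing a point $x$ on a slightly negatively rotated half-line through $p$ and writing $\mathcal{W}_{a,d}^{b,c}=(\mathcal{R}_{c,d}^x)^{-1}\circ\mathcal{W}_{a,x}^{b,c}\circ\mathcal{R}_{a,d}^x$, where the new $\mathcal{W}$ is crossing and still empty. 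Second, your clearing step for a positive interfering edge does not work as described: if $P(e_i^*)$ enters $P([a,b,c,d])$ through one side and exits through a non-adjacent side, then for \emph{any} apex $u$ with $P(u)\notin P([a,b,c,d])$ at least one of the two new segments still projects across the quadrilateral, so a single $\mathcal{R}$ cannot clear it. You would need a chain of $\mathcal{R}$ moves walking the edge around the region, and then you must argue licitness of each triangle against all the other (possibly many, possibly stacked) interfering edges; this is exactly the delicate point your last paragraph flags but does not resolve. A smaller issue: a sawtooth on a negative edge is a composition of type-$\mathcal{E}$ moves on a negative edge, not of type-$\mathcal{R}$ moves, so invoking Lemma~\ref{sawexi} for negative $e_i^*$ takes you outside the allowed move set.

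The paper sidesteps all of this with a global trick: once the move is crossing, it chooses $q\in r\cap\inter[a,b,c,d]$ and applies a homothety of ratio $t\gg1$ centred at $q$ to $M=L\setminus[a,d]$, pushing the whole of $M$ (and its projection) away from $[a,b,c,d]$ at once. The $\mathcal{W}$ on the dilated link is then really empty by construction, and the passage $L\to L_t$ (a perturbed homothety) factors through $\mathcal{R}$ moves. This avoids having to clear edges one by one and avoids any interaction between the clearing triangles.
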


\begin{proof}
Let us suppose that $\mathcal{W}=\mathcal{W}_{a,d}^{b,c}$ is non-crossing. We will show that it can be decomposed into two type-$\mathcal{R}$ moves and an empty crossing type-$\mathcal{W}$ move (Figure \ref{fig_prop_c}). Let us consider the half-line originated from $p$ that contains $d$, then let us rotate it by a small negative angle (with respect to the orientation given by $r$) on $r^{\perp}$. Let $x$ be a point on it, in the half-plane delimited by $[a,b]$ not containing $d$, close to $[a,b]$, so that the line segments $ax$, $xb$, $cx$ and $xd$ are all positive. Hence the move $\mathcal{W}_{a,x}^{b,c}$, operated after $\mathcal{R}_{a,d}^x$ is licit, non-crossing and:
\begin{align*}
\mathcal{W}_{a,d}^{b,c}=(\mathcal{R}_{c,d}^x)^{-1} \circ \mathcal{W}_{a,x}^{b,c} \circ \mathcal{R}_{a,d}^x.
\end{align*}

Moreover, the move $\mathcal{W}_{a,x}^{b,c}$ is still empty, since the convex hull $[a,b,c,x]$ is almost contained in $[a,b,c,d]$.

So we may focus on the case $\mathcal{W}$ is crossing. We can apply a homothety of ratio $t>1$, with respect to a center $q \in r \cap \inter[a,b,c,d]$, to $M=L \setminus [a,d]$. If we choose $t$ large enough, then the image $M_t$ of $M$ will be disjoint from $[a,b,c,d]$ and its projection will be disjoint from $P([a,b,c,d])$. Let $\tilde{a}$ and $\tilde{d}$ be a perturbation of the images of $a$ and $d$ respectively, such that $\tilde{a}a$ and $d \tilde{d}$ are both positive.

Then:
\begin{align*}
L_t&= [\tilde{a},a] \cup [a,d] \cup [d,\tilde{d}] \cup M_t \text{ and}\\
L'_t&= [\tilde{a},a] \cup [a,b] \cup [b,c] \cup [c,d] \cup [d,\tilde{d}] \cup M_t
\end{align*}
are joined by the really empty crossing move $\mathcal{W}_{a,d}^{b,c}$ Figure \ref{omotetia}). Finally, $L \rightarrow L_t$ and $L'_t \rightarrow L'$ are small perturbations of a homothety, hence factor through a sequence of type-$\mathcal{R}$ moves.
\end{proof}

By Proposition \ref{propA}, type-$\mathcal{R}$ moves are conjugation moves, and by Proposition \ref{propB} and Proposition \ref{propC} a type-$\mathcal{W}$ move can be decomposed into other conjugation moves and a really empty type-$\mathcal{W}$ move, that is a stabilization. In order to prove Markov's Theorem it is enough to show the following:

\begin{theorem}
Two braids whose closures are equivalent as links, are related to each other by a sequence of type-$\mathcal{R}$ and $\mathcal{W}$ moves.
\end{theorem}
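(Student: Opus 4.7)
The plan is to start from a PL isotopy between the two closures and, step by step, convert each elementary move into a sequence of type-$\mathcal{R}$ and type-$\mathcal{W}$ moves, using the sawtooth construction as the key bridge between generic PL moves and braid-compatible ones.

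Let $L_0$ and $L_1$ denote the closures of the two braids, viewed as PL links in braid position with respect to the axis $r$. Since $L_0$ and $L_1$ are equivalent as links, standard PL topology gives a finite sequence of type-$\mathcal{E}$ moves $L_0 = M_0 \to M_1 \to \dots \to M_N = L_1$, where the intermediate $M_i$ are arbitrary PL links, not necessarily in braid position. By the same factorisation argument used at the beginning of the proof of Proposition~\ref{propA}, each $\mathcal{E}$-move can be subdivided into a composition of smaller $\mathcal{E}$-moves; iterating, I can assume every triangle $[a,b,c]$ involved is so small that its interior meets $L$ in at most one of the simple ways listed in the proof of Proposition~\ref{propA} (one vertex, one crossing, or one strand of $L$), and the half-line $\theta = \theta_0$ in at most one edge.

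Next, for each small move $\mathcal{E}_{a,c}^b$ I would case-analyse the signs of the three edges $ac$, $ab$, $bc$. If all three are positive the move is already of type $\mathcal{R}$. If some are negative, I would apply Lemma~\ref{sawexi} (Existence of a Sawtooth) to each negative edge, using its $\Omega$- and $e'$-avoidance clauses to place the teeth so that they do not interfere with the remaining edges of the triangle or with the rest of $L$. Inserting these sawtooths replaces the original move by a short sequence of $\mathcal{E}$-moves whose triangles now carry only positive edges: some of these are directly of type $\mathcal{R}$, while those that pass through a negative intermediate edge in a decomposition $\mathcal{E}_{b,d}^c \circ \mathcal{E}_{a,d}^b$ are exactly captured by the six-edge positivity condition in the definition of a type-$\mathcal{W}$ move.

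The hard part will be this final verification: confirming that after inserting the sawtooths, the non-$\mathcal{R}$ steps really satisfy the definition of a type-$\mathcal{W}$ move. This requires a patient case analysis on which of $ac$, $ab$, $bc$ are negative, on which side of the axis the sawtooth teeth lie, and on whether the resulting quadrilateral is of crossing or non-crossing type. When more than one of the three edges is negative, the sawtooths must be chosen consistently so that they do not re-introduce negative edges in the other triangles of the decomposition; the avoidance clauses of Lemma~\ref{sawexi} are tailored precisely for this, and invoking them carefully is the technical heart of the argument.
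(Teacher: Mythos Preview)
Your proposal has a genuine structural gap.  You treat each elementary $\mathcal{E}$-move in the chain $M_0\to M_1\to\dots\to M_N$ in isolation, inserting sawtooths on the negative edges of its triangle and hoping the move then factors through $\mathcal{R}$ and $\mathcal{W}$ moves.  But once you have inserted a sawtooth to straighten the move $M_{i-1}\to M_i$, the link you are holding is no longer $M_i$: it is a sawtooth-ed version $\tilde M_i$ of $M_i$, and the next move $\mathcal{E}_{a',c'}^{b'}$ of the original chain was defined on $M_i$, not on $\tilde M_i$.  In general $[a',c']$ may have been replaced by teeth, or the triangle $[a',b',c']$ may now meet the new teeth.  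You give no mechanism to pass from the sawtooth-ed link produced at the end of step $i$ to the (possibly different) sawtooth-ed link needed at the start of step $i+1$.  Moreover, you only look at the three edges of the current triangle, whereas the intermediate $M_i$ typically carry many other negative edges created by earlier moves; all of those must be in braid position as well before one can speak of a chain of $\mathcal{R}$ and $\mathcal{W}$ moves, and putting them there creates exactly the same reconciliation problem.

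The paper solves this globally rather than locally: it introduces the height $h(L)$ (the number of negative edges), and proves the theorem by induction on the maximum height reached along the chain.  Lemma~\ref{lemma1} shows that a single move between two links of equal positive height can be rerouted through links of strictly smaller height (this is where your local idea actually lives, and even there it needs the compactness argument with the cover $\{U_z\}$ and the auxiliary $\mathcal{F}_i$ moves, not just ``insert a sawtooth'').  Lemma~\ref{lemma2} then eliminates isolated height-peaks $L\to L''\to L'$, and here the essential ingredient you are missing --- Proposition~\ref{cambiosaw}, which connects two \emph{different} sawtooths on the same edge by $\mathcal{R}$ and $\mathcal{W}$ moves --- is what allows one set of sawtooths to be exchanged for another.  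Without that proposition and the height induction, the local case analysis you sketch cannot be glued into a proof.
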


Another useful deformation move is the following:

\begin{definition}
Given $a$, $b$, $c$, $d$ such that $ab$, $ac$, $bd$, $cd$ are all positive, while $ad$ is negative, we set (Figure \ref{typeT})
\begin{align*}
\mathcal{T}_{a,b,d}^{a,c,d} = \mathcal{E}_{a,d}^c \circ (\mathcal{E}_{a,d}^b)^{-1}.
\end{align*}
\end{definition}

\begin{proposition}\label{dect}
A type-$\mathcal{T}$ move can be decomposed into type-$\mathcal{R}$ and $\mathcal{W}$ moves.
\end{proposition}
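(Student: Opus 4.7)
My plan is to bypass the negative virtual diagonal $[a,d]$ (the sole obstruction to reading $\mathcal{T}$ as a type-$\mathcal{R}$ or type-$\mathcal{W}$ move) by introducing an intermediate ``tooth'' vertex $e$ that produces a positive detour $a\to e\to d$, and then to factor $\mathcal{T}$ through $e$ as a short composition of type-$\mathcal{R}$ moves.

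First I will construct a point $e$ such that the edges $ae$, $be$, $ce$, $ed$ are all positive and the four triangles $[a,b,e]$, $[a,e,c]$, $[b,e,d]$, $[c,e,d]$ are each licit. Geometrically, $e$ must rotate around the axis $r$ past $a$, $b$ and $c$ while stopping short of $d$; because $[a,d]$ is negative, the angular region strictly between $\max\{\theta(b),\theta(c)\}$ and $\theta(d)$ taken in the positive rotational direction is non-empty, so such an $e$ exists as a point of space and can moreover be chosen sufficiently close to $r$ and to the virtual segment $[a,d]$ that the four triangles are thin slivers missing the rest of $L$. If a single $e$ does not suffice (because $L$ obstructs these slivers), I will iterate in the style of Lemma \ref{sawexi}, subdividing $[a,d]$ into several teeth and producing a longer chain of the same form.

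Once $e$ is fixed, the factorization I will verify is
\begin{align*}
\mathcal{T}_{a,b,d}^{a,c,d} \;=\; (\mathcal{R}_{c,d}^e)^{-1} \circ \mathcal{R}_{a,e}^c \circ (\mathcal{R}_{a,e}^b)^{-1} \circ \mathcal{R}_{b,d}^e,
\end{align*}
which I check by tracking the link through the intermediate configurations $[a,b]\cup[b,d] \to [a,b]\cup[b,e]\cup[e,d] \to [a,e]\cup[e,d] \to [a,c]\cup[c,e]\cup[e,d] \to [a,c]\cup[c,d]$. Each arrow is a type-$\mathcal{R}$ move or its inverse whose triangle has all positive edges (by the hypotheses on $ab$, $ac$, $bd$, $cd$ together with the choice of $e$) and is licit by construction, so in fact no type-$\mathcal{W}$ move is needed for the decomposition.

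The main obstacle I expect is the simultaneous construction of $e$ meeting both the positivity and licitness requirements: Lemma \ref{sawexi} handles exactly this geometry but only for an honest negative edge of $L$, whereas $[a,d]$ is merely virtual and four triangles, not one, must be controlled at once. I expect to resolve this by adapting the proof of Lemma \ref{sawexi}, invoking its convex-set clause with $\Omega$ taken to be a small neighbourhood of $\{b,c\}$ and of the pieces of $L$ incident to $a$ and $d$, and falling back on a multi-tooth iteration if a single $e$ cannot be isolated from the rest of $L$.
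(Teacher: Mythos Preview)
Your four-move factorization through an auxiliary vertex $e$ is algebraically correct, and the angular argument for the positivity of $ae$, $be$, $ce$, $ed$ is sound: since $ad$ is negative while $ab$, $bd$, $ac$, $cd$ are positive, there is indeed a non-empty angular window for $e$. The gap is in the licitness of the four triangles $[b,d,e]$, $[a,b,e]$, $[a,c,e]$, $[c,d,e]$, and your two proposed placements of $e$ do not close it. Taking $e$ close to the virtual segment $[a,d]$ forces $ae$ (and $ed$) to inherit the sign of $ad$, which is negative; taking $e$ close to the axis $r$ gives the right signs but makes each of the four triangles a large wedge reaching from $b$ or $c$ or $d$ almost to $r$, not a ``thin sliver'', and nothing prevents the rest of $L$ from crossing such a wedge. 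The only information you have about $L$ away from $[a,b]\cup[b,d]$ is that the two triangles $[a,b,d]$ and $[a,c,d]$ are clean; an $e$ with the required signs does not lie in either of them, so cleanness of your four triangles cannot be read off the hypotheses.

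Your fallback to Lemma~\ref{sawexi} and a multi-tooth iteration does not rescue this: that lemma builds teeth over an actual edge of $L$, controlling one triangle per tooth against a single convex obstacle, whereas here $[a,d]$ is virtual and at each tooth you must control \emph{four} triangles whose far vertices $b$ and $c$ are fixed in advance and may be far from the tooth. Passing from the single tooth at $b$ to an intermediate multi-tooth sawtooth and then to the single tooth at $c$ is exactly the content of Proposition~\ref{cambiosaw}, which in the paper relies on the present proposition, so invoking that mechanism would be circular.

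The paper's proof avoids this obstruction by using type-$\mathcal{W}$ moves in an essential way. It places $a'$ inside the clean triangle $[a,b,d]$ near $a$ (and near $[a,b]$) and $d'$ inside the clean triangle $[a,c,d]$ near $d$ (and near $[c,d]$); then $d'a'$ is positive because $da$ is, and the factorization
\[
\mathcal{T}_{a,b,d}^{a,c,d} \;=\; (\mathcal{R}_{c,d}^{d'})^{-1} \circ (\mathcal{W}_{d',d}^{a',b})^{-1} \circ \mathcal{W}_{a,a'}^{c,d'} \circ \mathcal{R}_{a,b}^{a'}
\]
goes through. The licitness of every move is inherited from the cleanness of $[a,b,d]$ and $[a,c,d]$ together with the smallness of $[a,a',b]$ and $[c,d',d]$, so no global control of $L$ away from those two triangles is needed. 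In short, the $\mathcal{W}$ moves are what let the argument stay inside the region already certified clean by the hypotheses of $\mathcal{T}$; your purely type-$\mathcal{R}$ route leaves that region and therefore needs an extra ingredient you have not supplied.
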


\begin{proof}
Let us take a point $a' \in [a,b,d]$ close to $a$ and very close to $[a,b]$, and $d' \in [a,c,d]$ close to $d$ and very close to $[c,d]$. So the line segments $aa'$ and $a'b$ are positive, since $ab$ is, and $cd'$, $d'd$ are also positive, since $cd$ is. Consequently, the moves $\mathcal{E}_{a,b}^{a'}$ and $\mathcal{E}_{c,d}^{d'}$ are type-$\mathcal{R}$ moves.

Now, the move $\mathcal{W}_{a,a'}^{c,d'}$ is well defined, because $aa'$ is positive as noticed above, $ac$ and $cd$ are positive by hypothesis, $d'a'$ has the same sign as $da$, so it is positive too, and finally both the auxiliary edges $a'c$ and $d'a$ are positive since $ac$ and $da$ are. Similarly, $\mathcal{W}_{d',d}^{a',b}$ is well defined.

So the following holds (Figure \ref{fig_dec_t})
\begin{align*}
\mathcal{T}_{a,b,d}^{a,c,d} = (\mathcal{R}_{c,d}^{d'})^{-1} \circ (\mathcal{W}_{d,d'}^{a',b})^{-1} \circ \mathcal{W}_{a,a'}^{c,d'} \circ \mathcal{R}_{a,b}^{a'}
\end{align*}
which concludes the proof.
\end{proof}

\begin{proposition} \label{cambiosaw}
Two sawteeth $\mathcal{S}$ and $\mathcal{S}'$ erected on the same negative edge $e$ are connected by a chain of type-$\mathcal{R}$ and $\mathcal{W}$ moves.
\end{proposition}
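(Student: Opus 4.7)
The plan is to work in two stages: first treat the case where $\mathcal{S}$ and $\mathcal{S}'$ use the same set of basepoints on $e$, and second reduce the general case to this one by subdividing basepoints. In both stages, the essential tool is the type-$\mathcal{T}$ move, which by Proposition~\ref{dect} already decomposes into $\mathcal{R}$ and $\mathcal{W}$ moves, so it is enough to interpolate the two sawteeth by $\mathcal{R}$, $\mathcal{W}$ and $\mathcal{T}$ moves.

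Suppose first that $\mathcal{S}$ and $\mathcal{S}'$ share basepoints $p_0,\dots,p_m$ but have distinct peaks $q_i,q_i'$. I change the peaks one tooth at a time: for each $i$, the move $\mathcal{T}_{p_{i-1},q_i,p_i}^{p_{i-1},q_i',p_i}$ replaces the $i$-th tooth of $\mathcal{S}$ with the $i$-th tooth of $\mathcal{S}'$; its hypotheses hold because the four edges $p_{i-1}q_i$, $q_ip_i$, $p_{i-1}q_i'$, $q_i'p_i$ are positive (tooth edges) while $p_{i-1}p_i$ is negative, as a sub-edge of $e$.

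To reduce to the common-basepoints case, it suffices to insert a single new basepoint $p^\ast\in(p_i,p_{i+1})$ into a given sawtooth. I pick two auxiliary points $\tilde q$ and $\tilde q'$ very close to $q_{i+1}$, with $\tilde q$ slightly inside $[p_i,q_{i+1}]$ and $\tilde q'$ slightly inside $[q_{i+1},p_{i+1}]$, and choose $p^\ast$ so that the short-cut edge $[\tilde q,\tilde q']$ and the edges $[\tilde q,p^\ast]$, $[p^\ast,\tilde q']$ are all positive. Then the sequence of four type-$\mathcal{R}$ moves
\[
\mathcal{R}_{p_i,q_{i+1}}^{\tilde q},\qquad \mathcal{R}_{q_{i+1},p_{i+1}}^{\tilde q'},\qquad (\mathcal{R}_{\tilde q,\tilde q'}^{q_{i+1}})^{-1},\qquad \mathcal{R}_{\tilde q,\tilde q'}^{p^\ast}
\]
first inserts $\tilde q,\tilde q'$ onto the two tooth edges, then shortcuts out the peak $q_{i+1}$ through the positive diagonal $[\tilde q,\tilde q']$, and finally expands the short-cut at $p^\ast$, producing the two-tooth subdivision $p_i\tilde qp^\ast\tilde q'p_{i+1}$. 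Iterating, I refine $\mathcal{S}$ and $\mathcal{S}'$ to sawteeth on the union of their basepoints, and then apply the first stage.

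The main obstacle is ensuring licitness of each $\mathcal{T}$, $\mathcal{R}$ and $\mathcal{W}$ move, that is, that the associated triangles and quadrilaterals are empty of the current (possibly hybrid) link. The tooth triangles $[p_{i-1},q_i,p_i]$ and $[p_{i-1},q_i',p_i]$ are empty of $L$ by the sawtooth hypotheses on $\mathcal{S}$ and $\mathcal{S}'$, but in the hybrid states arising while peaks are being swapped, a previously altered neighbouring tooth could in principle invade them. I would defuse this by preceding the whole argument with a shrinking phase: $\mathcal{T}$-moves applied one tooth at a time bring every peak of both $\mathcal{S}$ and $\mathcal{S}'$ into an arbitrarily thin neighbourhood of $e$, so that afterwards all tooth triangles become pairwise disjoint slivers along $e$. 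Once both sawteeth are shrunk in this way, every subsequent licitness check reduces to a condition already verified in the construction of Lemma~\ref{sawexi}, and the argument goes through.
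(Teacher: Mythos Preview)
Your first stage—swapping peaks one tooth at a time by $\mathcal{T}$ moves once the basepoints agree—is exactly what the paper does, and the shrinking idea you sketch to control licitness is a reasonable way to make that step honest.

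The gap is in the subdivision step. You want to split a tooth $[p_i,q_{i+1},p_{i+1}]$ at a \emph{prescribed} point $p^\ast\in(p_i,p_{i+1})$ using only $\mathcal{R}$ moves, the last being $\mathcal{R}_{\tilde q,\tilde q'}^{p^\ast}$ with $\tilde q\in[p_i,q_{i+1}]$ and $\tilde q'\in[q_{i+1},p_{i+1}]$. But this move can never be of type $\mathcal{R}$. Work in the projection to $r^\perp$ and look at the line $\ell$ through the axis point $p$ and $P(p^\ast)$: the sign of $p^\ast x$ depends only on which side of $\ell$ the point $P(x)$ lies. Since $p^\ast p_i$ is positive and $p^\ast p_{i+1}$ is negative (sub-edges of $e$), the points $P(p_i)$ and $P(p_{i+1})$ lie in opposite open half-planes. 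Whichever of these contains $P(q_{i+1})$, the corresponding tooth edge projects entirely into that half-plane by convexity; hence for \emph{every} $\tilde q\in[p_i,q_{i+1}]$ or \emph{every} $\tilde q'\in[q_{i+1},p_{i+1}]$ one of $\tilde q\,p^\ast$, $p^\ast\,\tilde q'$ is forced to be negative. Geometrically, the tooth triangle wraps once around the axis, and you cannot pull a vertex back down to the negative base $e$ while remaining in braid position. (Note also that $p^\ast$ is not yours to choose—it must be a basepoint of the other sawtooth—so the phrase ``choose $p^\ast$ so that\dots'' already signals trouble.)

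This is precisely why the paper uses a $\mathcal{W}$ move, not $\mathcal{R}$ moves, to subdivide. After a preliminary $\mathcal{T}$ move making $q_{i+1}p^\ast$ positive, one picks a new vertex $c$ close to the tooth with $p^\ast c$, $c\,p_{i+1}$ and $c\,q_{i+1}$ positive and applies $\mathcal{W}_{q_{i+1},p_{i+1}}^{p^\ast,c}$, obtaining the two-tooth path $p_i\to q_{i+1}\to p^\ast\to c\to p_{i+1}$. The $\mathcal{W}$ move is exactly the device that allows the new basepoint to land on the far side of the axis from the peak; your four-$\mathcal{R}$ shortcut tries to avoid this and cannot.
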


\begin{proof}
We first claim that given a point $d \in e$ and a sawtooth $\mathcal{S}=\mathcal{S}_{a_0,\dots,a_m}^{b_1,\dots,b_m}$ on $e$, we can construct a finer sawtooth with $d$ among its basepoints. If $d$ is already among the basepoints of $\mathcal{S}$, there is nothing to show. Otherwise, let us suppose that $d$ belongs to the internal part of $[a_i,a_{i+1}]$, where $a_i$ and $a_{i+1}$ are two basepoints of $\mathcal{S}$. We can assume, up to moving the vertex $b_{i+1}$ using a licit type-$\mathcal{T}$ move, that the edge $[b_{i+1},d]$ is positive or negative. Let us consider the first case (the other is similar). Let $c$ be a point close to the triangle $[a_i,a_{i+1},b_{i+1}]$ and such that $dc$ and $ca_{i+1}$ are positive. Hence the move $\mathcal{W}_{b_{i+1},a_{i+1}}^{d,c}$ is licit and produces a finer sawtooth with $d$ among its basepoints, as shown in Figure \ref{fig_cambio_saw}.

Getting to the statement, by the claim we may assume that $\mathcal{S}$ and $\mathcal{S}'$ have the same basepoints. We operate a type-$\mathcal{T}$ move to each of the type-$\mathcal{E}$ moves that define $\mathcal{S}$, thus obtaining the sawtooth $\mathcal{S}'$. This is permitted, since after every operation of type $\mathcal{T}$, we get another intermediate sawtooth. Finally, thanks to Proposition \ref{dect}, the type-$\mathcal{T}$ moves can be decomposed into type-$\mathcal{R}$ and $\mathcal{W}$ moves.
\end{proof}
\newpage
\begin{figure}[p]
\centering
\includegraphics{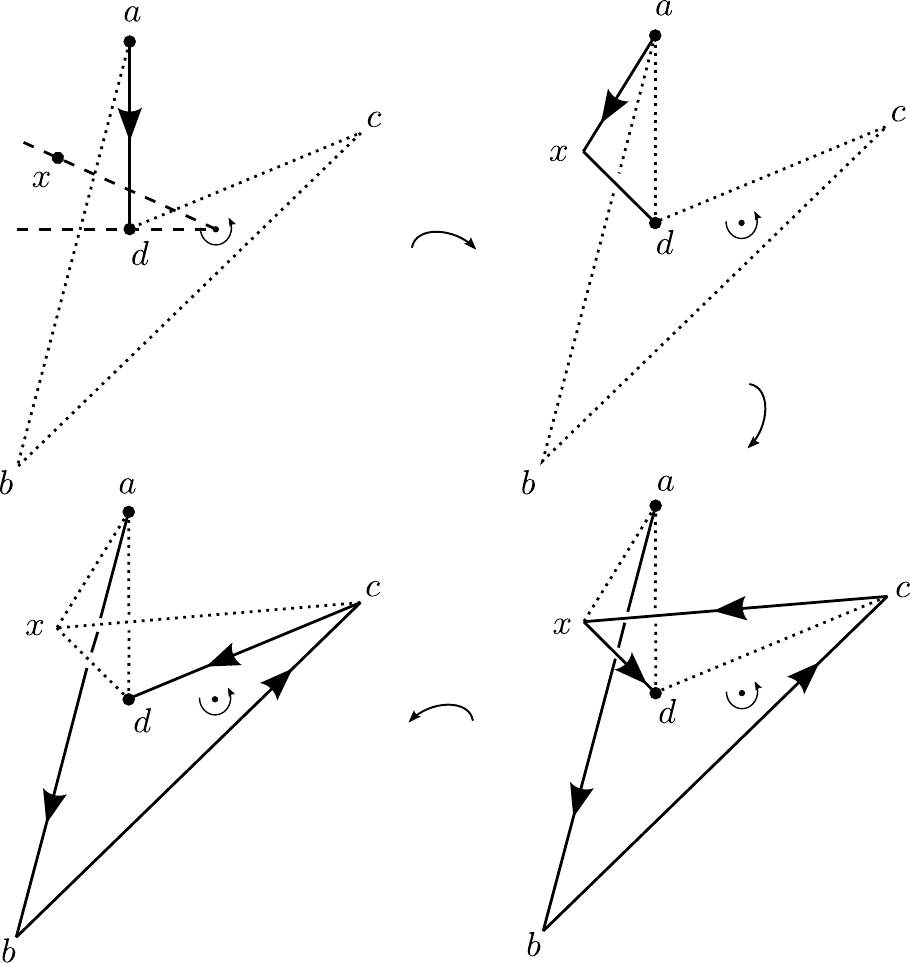}
\caption{The decomposition of a non-crossing type-$\mathcal{W}$ move}
\label{fig_prop_c}
\end{figure}
\begin{figure}[p]
\centering
\includegraphics{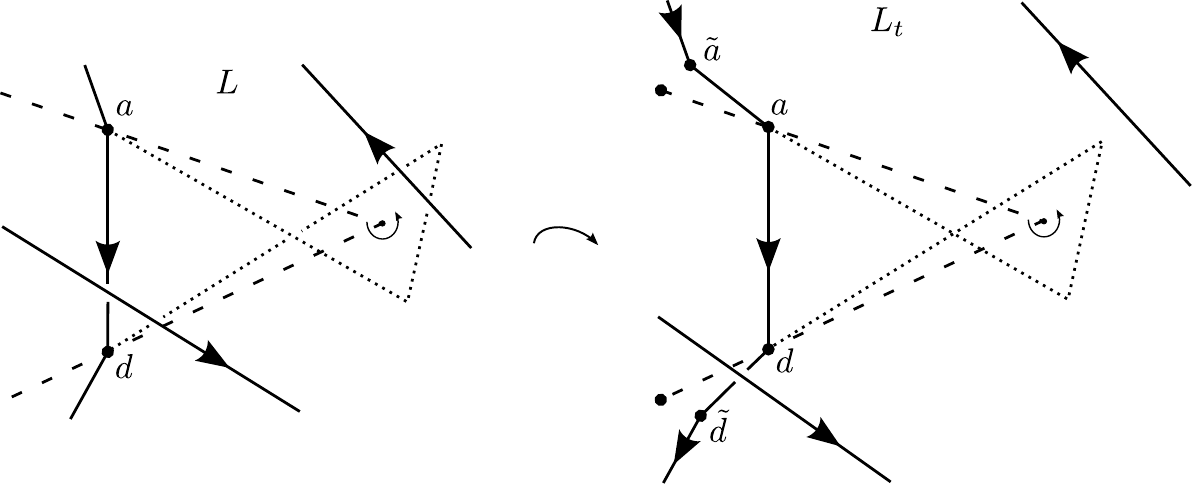}
\caption{The decomposition in a really empty type-$\mathcal{W}$ move and $\mathcal{R}$ moves}
\label{omotetia}
\end{figure}
\begin{figure}[p]
\centering
\includegraphics{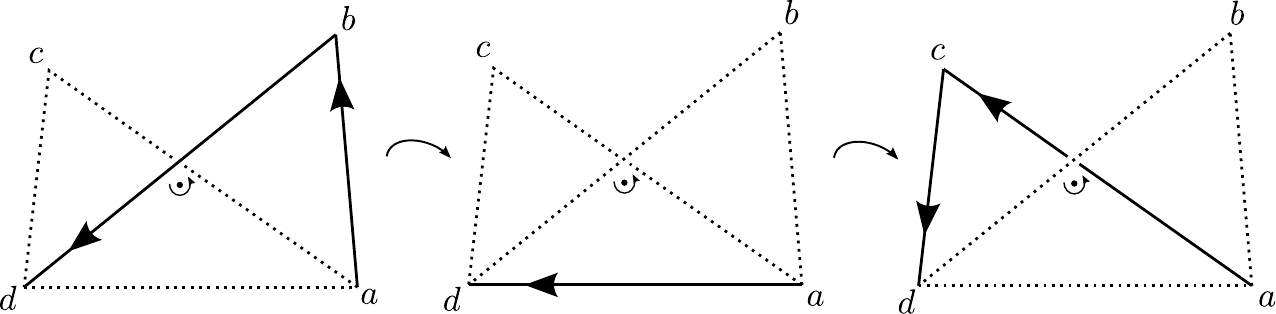}
\caption{A type-$\mathcal{T}$ move}
\label{typeT}
\end{figure}

\begin{figure}[p]
\centering
\includegraphics{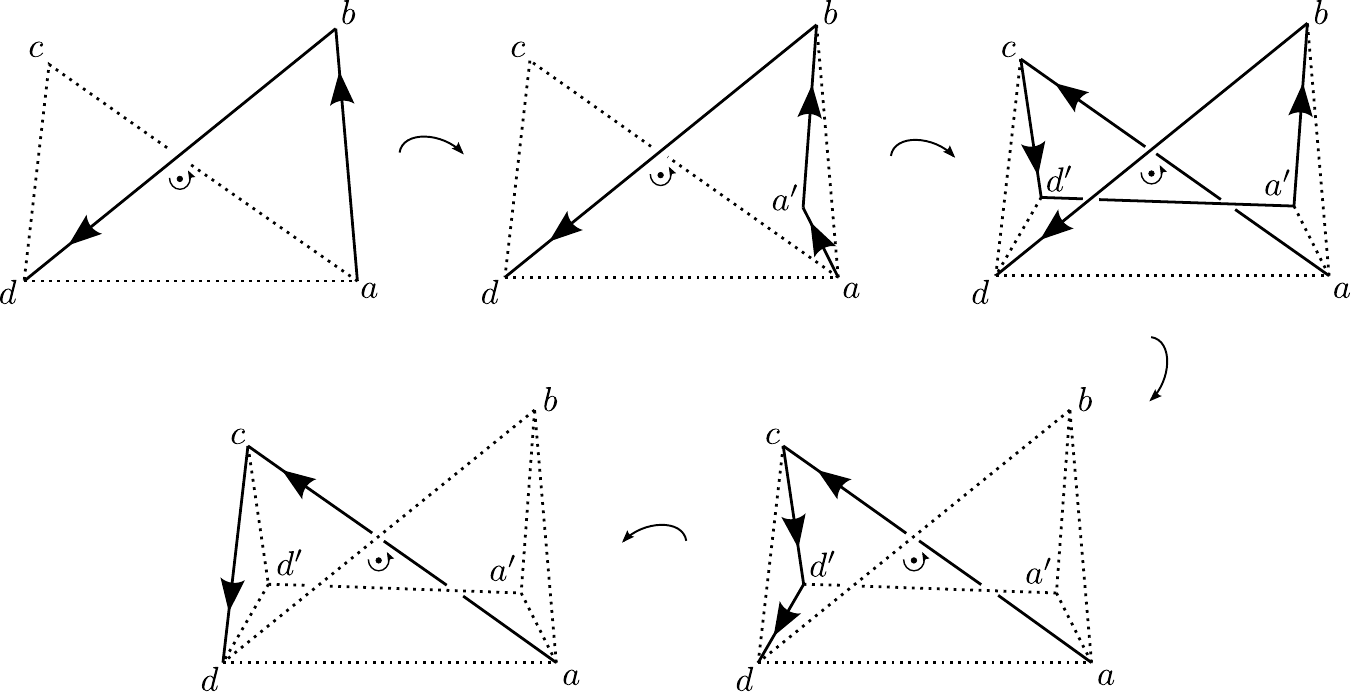}
\caption{Decomposition of a type-$\mathcal{T}$ move}
\label{fig_dec_t}
\end{figure}

\begin{figure}[p]
\centering
\includegraphics{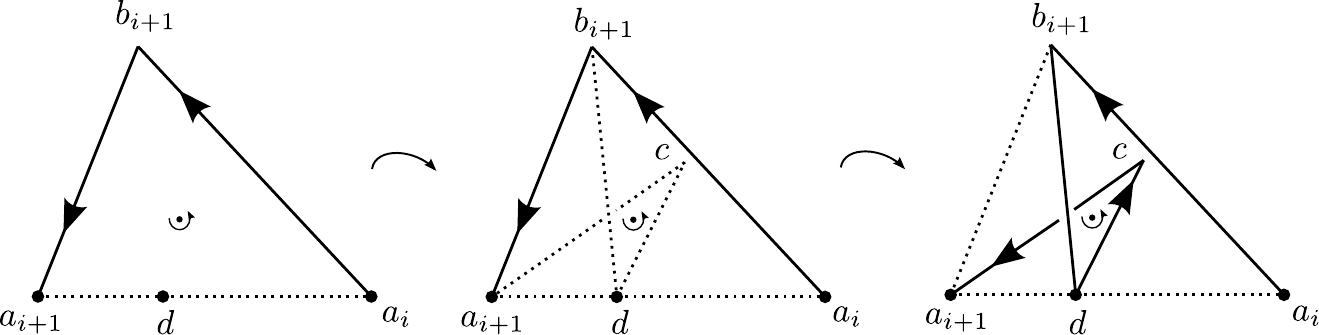}
\caption{Including $d$ in a sawtooth}
\label{fig_cambio_saw}
\end{figure}

\chapter{Markov's Theorem: the proof}

Given a $PL$ link $L$ in general position with respect to the rotation axis $r$, so that every edge has a sign, we can define the height $h(L)$ of $L$ as the number of negative edges of $L$. The link is in braid position if and only if its height is $0$, if and only if it is the closure of a braid. In general, two isotopic $PL$ links are joined by a chain of type-$\mathcal{E}$, $\mathcal{R}$, $\mathcal{S}$, $\mathcal{W}$ or $\mathcal{T}$ moves, so each operation may change the height.

\begin{remark} \label{h0}
If two $PL$ links $L$ and $L'$ are the closure of two braids and are joined by a chain of deformation moves of the above mentioned types, and if the height of each link of the chain is $0$, then the deformation moves which occur in the chain can only be of type-$\mathcal{R}$ and $\mathcal{W}$, therefore in order to prove Markov's Theorem, we only need to show that we can find a chain of deformation moves from $L$ to $L'$ that preserve the height.
\end{remark}

This will easily follow from the next two lemmas, to the proof of which this chapter is devoted.

\begin{lemma}\label{lemma1}
If two links $L$ and $L'$ are joined by a single deformation move of type-$\mathcal{E}$, $\mathcal{R}$, $\mathcal{S}$, $\mathcal{W}$ or $\mathcal{T}$ and $h(L)=h(L') >0$, then there is a chain of links
\begin{align*}
L=L_0 \rightarrow L_1 \rightarrow \dots \rightarrow L_n=L'
\end{align*}
each obtained from the previous one using a deformation move of the above mentioned type, such that $h(L_i)< h(L)$ for every $i=1,\dots,n-1$.
\end{lemma}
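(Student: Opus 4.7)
The strategy is a ``sawtooth sandwich'': pick a negative edge $e$ of $L$ lying outside the convex hull $\Omega$ of the given move, erect on $e$ a sawtooth $\mathcal{S}$ that is disjoint from $\Omega$ (possible by Lemma \ref{sawexi}, part 1), and then use the chain $L \to \mathcal{S}(L) \to \mathcal{S}(L') \to L'$. The first arrow is the sawtooth, the second is the original move (still licit because $\mathcal{S}$ left $\Omega$ untouched, so the edges of $L$ inside $\Omega$ are copied verbatim into $\mathcal{S}(L)$), and the third arrow is the reverse sawtooth. Both intermediate links have height $h(L)-1 < h(L)$, since a sawtooth replaces one negative edge by purely positive ones and the subsequent move acts far from $\mathcal{S}$. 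Note that a type-$\mathcal{S}$ move cannot actually occur between $L$ and $L'$ under the hypothesis $h(L)=h(L')$, because a sawtooth strictly changes the height; this case can be dismissed at the outset.

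First I would carry out the generic subcase: a negative edge $e$ of $L$ whose intersection with $\Omega$ is at most one vertex exists. Such an $e$ is unaffected by the move, hence is also a negative edge of $L'$, and the sawtooth sandwich above works verbatim. When the move is of type $\mathcal{E}$, $\mathcal{R}$ or $\mathcal{T}$ the relevant $\Omega$ is a triangle, and when it is of type $\mathcal{W}$ the relevant $\Omega$ is the quadrilateral $[a,b,c,d]$; either choice is convex, so the avoidance clause of Lemma \ref{sawexi} applies directly.

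The hard part, which I expect to be the main obstacle, is the degenerate subcase where every negative edge of $L$ lies in the interior of $\Omega$. Since the move touches at most two edges of $L$, this forces $h(L)\leq 2$ and forces the negative edges of $L$ to be exactly the edges modified by the move (symmetrically for $L'$). In this case I would erect the sawtooth directly on a negative edge inside $\Omega$, turning that edge into a positive zigzag, and then re-express the original move in terms of operations on this zigzag. Here the previously developed machinery is essential: Proposition \ref{propA} reinterprets type-$\mathcal{R}$ moves at the level of closures, Propositions \ref{propB} and \ref{propC} allow us to break type-$\mathcal{W}$ moves into simpler ones, Proposition \ref{dect} decomposes type-$\mathcal{T}$ moves, and Proposition \ref{cambiosaw} gives us the flexibility to pass between different sawteeth on the same negative edge. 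A geometric inspection analogous to the fourth case in the proof of Proposition \ref{propA}, together with these decompositions, will verify that every intermediate link produced has strictly smaller height than $L$, completing the chain from $L$ to $L'$ below the height barrier.
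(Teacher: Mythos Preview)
Your generic subcase is correct and matches the paper exactly: when $L$ has a negative edge $e$ not touched by the move, erect a sawtooth on $e$ avoiding the convex hull $\Omega$, and sandwich the original move between $\mathcal{S}$ and $\mathcal{S}^{-1}$. The paper observes, as you do, that $\mathcal{R}$, $\mathcal{W}$, $\mathcal{T}$ only involve positive edges, so this case always applies to them; the only genuinely new case is a height-preserving $\mathcal{E}_{a,c}^b$ (with $ac$ negative, say $bc$ negative and $ab$ positive) when $h(L)=1$.

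Your treatment of that degenerate case, however, is not a proof. Citing Propositions~\ref{propA}, \ref{propB}, \ref{propC}, \ref{dect} is a red herring: those results translate $\mathcal{R}$ and $\mathcal{W}$ moves into conjugation and stabilization, and say nothing about height. The analogy with ``the fourth case in Proposition~\ref{propA}'' is also misplaced, since that case concerns the braid word across the cut $\theta=\theta_0$, not the height function. The actual content needed here is a concrete construction, and the paper supplies it in two steps. First, assuming $a$ and $b$ are close, one erects sawteeth $\mathcal{S}_{\underline{a}}^{\underline{d}}$ on $ac$ and $\mathcal{S}_{\underline{b}}^{\underline{d}}$ on $bc$ \emph{with the same vertex set} $d_1,\dots,d_m$, so that the teeth sit inside disjoint pyramids; one then slides the base of each tooth from $[a_i,a_{i+1}]$ to $[b_i,b_{i+1}]$ by a pair of $\mathcal{R}$ moves (the $\mathcal{F}_i$ in the paper), giving the factorisation
\[
\mathcal{E}_{a,c}^b \;=\; (\mathcal{S}_{\underline{b}}^{\underline{d}})^{-1}\circ \mathcal{F}_0\circ\cdots\circ\mathcal{F}_{m-1}\circ \mathcal{S}_{\underline{a}}^{\underline{d}},
\]
in which all intermediate links have height $0$. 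Second, for $a$ and $b$ not close, the existence of a shared-vertex sawtooth is an \emph{open} condition on the moving endpoint, so by compactness of $[a,b]$ one chooses finitely many intermediate points $a=z_0,x_0,z_1,x_1,\dots,z_n=b$ and applies the close case between consecutive ones; the resulting adjacent $\mathcal{S}\circ\mathcal{S}'^{-1}$ pairs on the same edge are then traded for height-preserving $\mathcal{R}$ and $\mathcal{W}$ moves via Proposition~\ref{cambiosaw}. This shared-vertex idea and the compactness bridge are the missing ingredients in your sketch; without them, ``re-expressing the original move on the zigzag'' remains an assertion rather than an argument.
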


\begin{proof}
The move joining $L$ and $L'$ can only be of type $\mathcal{R}$, $\mathcal{W}$ or $\mathcal{E}$ sending a negative edge to a positive and a negative one.

Since $h(L)>0$, then $L$ has a negative edge, namely $xy$. If the move is of type $\mathcal{R}$ or $\mathcal{W}$, then it does not apply to $xy$, because it only involves positive edges. In each case we can erect a sawtooth on $xy$ that avoids the convex hull of the points involved in the move; formally, if the sawtooth is $\mathcal{S}_{p_0,\dots,p_n}^{q_1,\dots,q_n}$:
\begin{itemize}
\item if $\mathcal{R}=\mathcal{R}_{a,b}^c$, then for each $i$: $[a,b,c] \cap [p_i,p_{i+1},q_{i+1}] = [p_i,p_{i+1}]$;
\item if $\mathcal{W}=\mathcal{W}_{a,d}^{b,c}$, then for each $i$: $[a,b,c,d] \cap [p_i,p_{i+1},q_{i+1}] = [p_i,p_{i+1}]$: this is possible because by the previous results we can assume that the move is of empty type.
\end{itemize}

If $h(L) > 1$, and the move is of type $\mathcal{E}$, we can erect a sawtooth on an edge which is not involved in the type-$\mathcal{E}$ move and, as above:
\begin{itemize}
\item if $\mathcal{E}=\mathcal{E}_{a,b}^c$, then for each $i$: $[a,b,c] \cap [p_i,p_{i+1},q_{i+1}] = [p_i,p_{i+1}]$.
\end{itemize}

Now let us suppose that $h(L)=1$ and fix $\mathcal{E}= \mathcal{E}_{a,c}^b$. Without loss of generality, $ac$ and $bc$ are negative, while $ab$ is positive. First, we suppose that $a$ and $b$ are close enough so that it is possible to erect a sawtooth on both $ac$ and $bc$ with the same vertices (and the same number of basepoints), all the involved edges have a sign and all the involved pyramids intersect the link only in the edge contained in $[a,c]$ and do not intersect with each other (Figure \ref{piramidi}).

\begin{figure}[htb]
\centering
\includegraphics{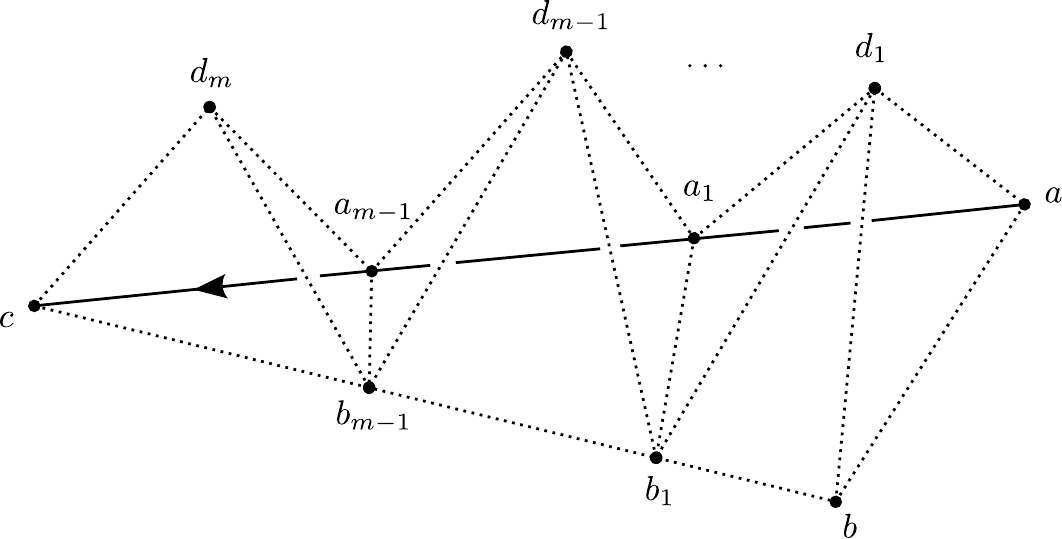}
\caption{Factorization of $\mathcal{E}_{a,c}^b$}
\label{piramidi}
\end{figure}

Let $\mathcal{S}_{\underline{a}}^{\underline{d}} =  \mathcal{S}_{a_0,\dots,a_m}^{d_1,\dots,d_m}$ and similarly $\mathcal{S}_{\underline{b}}^{\underline{d}}$ the sawteeth on $ac$ and $bc$ respectively. We will show that $\mathcal{E}_{a,c}^b$ can be decomposed as:
\begin{align} \label{schifo}
\mathcal{E}_{a,c}^b=(\mathcal{S}_{\underline{b}}^{\underline{d}})^{-1} \circ \mathcal{F}_0 \circ \mathcal{F}_1 \circ \dots \circ \mathcal{F}_{m-1} \circ \mathcal{S}_{\underline{a}}^{\underline{d}}
\end{align}
where the $\mathcal{F}_i$s are defined as follows:
\begin{itemize}
\item If $a_{m-1}b_{m-1}$ is positive, then the moves $\mathcal{R}_{a_{m-1},d_m}^{b_{m-1}}$ and $(\mathcal{R}_{d_{m-1},b_{m-1}}^{a_{m-1}})^{-1}$ are licit on the edge $a_{m-1} b_{m-1}$ after applying $\mathcal{S}_{\underline{a}}^{\underline{d}}$ and we define
\begin{align*}
\mathcal{F}_{m-1}=(\mathcal{R}_{d_{m-1},b_{m-1}}^{a_{m-1}})^{-1} \circ\mathcal{R}_{a_{m-1},d_m}^{b_{m-1}};
\end{align*}
\item If $a_{m-1}b_{m-1}$ is negative, then the moves $\mathcal{R}_{d_{m-1},a_{m-1}}^{b_{m-1}}$ and $(\mathcal{R}_{b_{m-1},d_m}^{a_{m-1}})^{-1}$ are licit and we define
\begin{align*}
\mathcal{F}_{m-1}=(\mathcal{R}_{b_{m-1},d_m}^{a_{m-1}})^{-1} \circ \mathcal{R}_{d_{m-1},a_{m-1}}^{b_{m-1}}.
\end{align*}
\end{itemize}

The case where $a_{m-1}b_{m-1}$ is positive is shown in Figure \ref{tantepiramidi}.

\begin{figure}[p]
\centering
\includegraphics{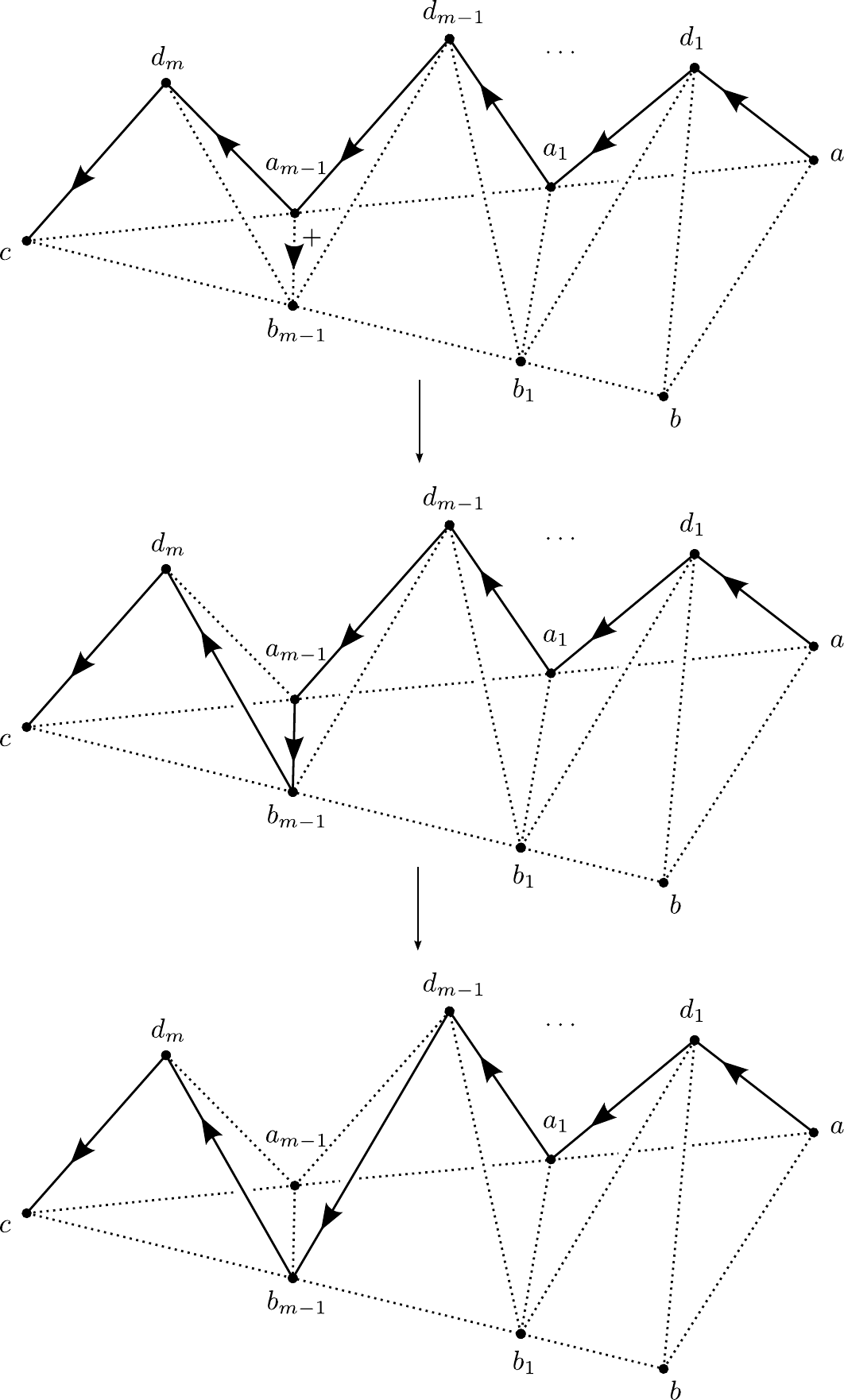}
\caption{The move $\mathcal{F}_{m-1}$, case $a_{m-1}b_{m-1}$ positive}
\label{tantepiramidi}
\end{figure}

In the same way we define the operations $\mathcal{F}_{m-2}, \dots, \mathcal{F}_1$. Finally we need to define $\mathcal{F}_0$. Since $ab$ is positive, we simply take $\mathcal{F}_0= \mathcal{R}_{a,d_1}^b$.

Let us study the chain \eqref{schifo}. The first move is a sawtooth, and decreases height by $1$. The type-$\mathcal{F}$ moves are compositions of type-$\mathcal{R}$ moves, and preserve height. The final $\mathcal{S}^{-1}$ operation increases the height by $1$ again.

In general, the existence of a sawtooth on a negative edge $xy$ given a set of vertices $\{q_1,\dots,q_m\}$ is guaranteed if there exist some points $p_0=x$, $p_1, \dots, p_m=y$ on $xy$ such that $p_i q_{i+1}$ and $q_{i+1} p_{i+1}$ are all positive and $[p_i,p_{i+1},q_{i+1}] \cap L = [p_i, p_{i+1}]$ for every $i$. This is an open condition on $y$. Moreover, if $xy'$ is another negative edge, and $y'$ is close to $y$, then we can erect a sawtooth on $xy'$ with the same vertices $\{q_1,\dots,q_m\}$ and basepoints $\{p'_0,\dots,p'_m\}$ such that $[p_i,p'_i,q_{i+1},p_{i+1},p'_{i+1}] \cap L = [p_i,p_{i+1}]$; this is an open condition on $y'$.

Now, given $z \in [a,b]$ and fixed a random sawtooth $\mathcal{S}$ on $zc$, let $U_{z}$ be an open connected neighbourhood containing only points $y \in [a,b]$ such that on $yc$ can be erected a sawtooth with the same vertices as $\mathcal{S}$ and all the pyramids avoid the link in the same sense as above. Hence $\bigcup_{z \in [a,b]} U_z = [a,b]$ and since $[a,b]$ is compact, there exists a finite subcover, given by: $z_0=a, z_1,\dots,z_n = b$. We can assume that the points $z_i$ are ordered on the edge $ab$. Since each $U_{z_i}$ is an open line segment, then $U_{z_i} \cap U_{z_{i+1}}$ is an open line segment too; let $x_i \in U_{z_i} \cap U_{z_{i+1}}$ for each $i$ (Figure \ref{ricopr}).

\begin{figure}[htb]
\centering
\includegraphics{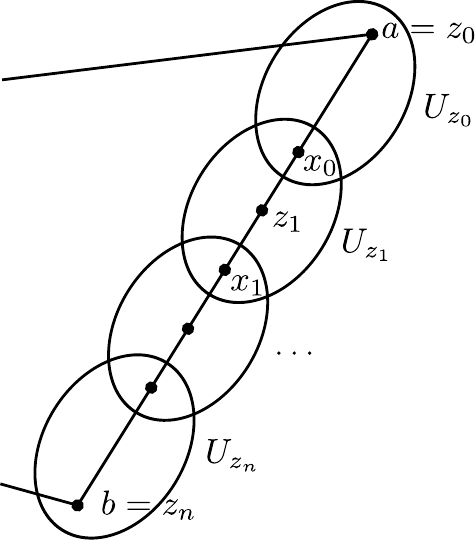}
\caption{The finite open cover $U_{z_0}, \dots, U_{z_n}$}
\label{ricopr}
\end{figure}

Now we can erect a sawtooth with common vertices on $z_ic$ and $x_ic$ and also on $x_ic$ and $z_{i+1}c$ for every $i$, and write the move $\mathcal{E}_{a,c}^b$ as a composition of moves of type \eqref{schifo}. Finally we change every $\mathcal{S} \circ \mathcal{S'}^{-1}$ move that appears using Proposition \ref{cambiosaw}, thus obtaining an initial move of type $\mathcal{S}^{-1}$ followed by the composition of type-$\mathcal{R}$ and $\mathcal{W}$ moves and finally another sawtooth. That means that with the first operation we decrease height, then we keep it constant until the last move, which increases it again.
\end{proof}

\begin{lemma}\label{lemma2}
If two links $L$ and $L'$ are joined by a chain of deformation moves
\begin{align*}
L \rightarrow L'' \rightarrow L'
\end{align*}
such that $h(L'') > h(L)$ and $h(L'') > h(L')$, then there is a chain of links
\begin{align*}
L=L_0 \rightarrow L_1 \rightarrow \dots \rightarrow L_n=L'
\end{align*}
such that $h(L_i) < h(L'')$ for each $i$.
\end{lemma}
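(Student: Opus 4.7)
My plan is to prove Lemma \ref{lemma2} by commuting the two moves in the chain $L \to L'' \to L'$ so as to route around the peak at $L''$. The first step is to classify the moves. Since $\mathcal{R}$, $\mathcal{W}$, and $\mathcal{T}$ moves all preserve height (their defining data consist entirely of positive edges, or in the $\mathcal{T}$ case the negative edge is the same before and after), the only atomic moves that can change the height are of type $\mathcal{E}$ and $\mathcal{S}$. Hence the ascending move $\mu_1 : L \to L''$ is either a type-$\mathcal{E}$ move that creates one or two new negative edges, or the inverse of a sawtooth; dually, $\mu_2 : L'' \to L'$ is either a height-decreasing type-$\mathcal{E}$ or a sawtooth $\mathcal{S}$. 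Since a single move changes height by at most $2$, one also has $h(L),\,h(L') \in \{h(L'')-1,\,h(L'')-2\}$.

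The central construction is to produce an intermediate link $\tilde L$ together with moves $L \to \tilde L \to L'$ satisfying $h(\tilde L) < h(L'')$. In the disjoint case, when the triangles and edges involved in $\mu_1$ and $\mu_2$ are separated, the two moves literally commute: one applies the pattern of $\mu_2$ to $L$ first, obtaining $\tilde L$ with $h(\tilde L) = h(L) - \bigl(h(L'')-h(L')\bigr) \le h(L) - 1 < h(L'')$, and then $\mu_1$ on $\tilde L$ yields $L'$. The hard situation is the interacting one — most notably when $\mu_1 = \mathcal{E}_{a,c}^b$ creates a negative edge on which $\mu_2$ is a sawtooth. In that case the plan is to use the machinery of Chapter 2: Lemma \ref{sawexi} allows one to erect an auxiliary sawtooth on the already existing negative edges of $L$ while avoiding the convex hulls of $[a,b,c]$ and the data of $\mu_2$; Proposition \ref{cambiosaw} interchanges sawtooths erected on a common negative edge; and Propositions \ref{propB}, \ref{propC} factor any type-$\mathcal{W}$ that appears into type-$\mathcal{R}$'s and a really empty type-$\mathcal{W}$. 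Using these, one realises the net transformation $L \to L'$ by a chain of $\mathcal{E},\,\mathcal{R},\,\mathcal{S},\,\mathcal{W},\,\mathcal{T}$ moves, each of which passes through a link of height at most $\max\{h(L),h(L')\} < h(L'')$.

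The main obstacle will be the combinatorial bookkeeping in the interacting case. There are several subcases to be checked — in particular when both $\mu_1$ and $\mu_2$ are of type-$\mathcal{E}$ and share a vertex or an edge — and in each one must verify that every intermediate link stays strictly below height $h(L'')$. A convenient way to structure this is by induction on the number of edges or vertices that $\mu_1$ and $\mu_2$ share, using the Chapter 2 decompositions as the induction step to reduce to the disjoint case. No further appeal to Lemma \ref{lemma1} is needed to finish, since the statement only demands $h(L_i) < h(L'')$ rather than a strict decrease at every single step.
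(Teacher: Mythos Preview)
Your outline identifies the right ingredients but leaves the central difficulty unaddressed. The genuine gap is already in your ``disjoint'' case: even when $\mu_1$ and $\mu_2$ act on different edges of $L''$, you cannot simply apply $\mu_2$ to $L$ first, because the data defining $\mu_2$ (the teeth of a sawtooth, or the triangle of an $\mathcal{E}$ move) were chosen to avoid $L''$, not $L$. Concretely, if $\mu_1 = \mathcal{S}_1^{-1}$ and $\mu_2 = \mathcal{S}_2$ are sawteeth on distinct negative edges $e,e'$ of $L''$, the triangles of $\mathcal{S}_2$ may well meet the teeth of $\mathcal{S}_1$ sitting in $L$, so the moves do not literally commute and your formula $h(\tilde L) = h(L) - (h(L'')-h(L'))$ is not justified. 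The paper resolves this not by commuting, but by an iterative procedure: erect on $e'$ a \emph{fresh} sawtooth avoiding only the first tooth $[p_0,p_1,q_1]$ of $\mathcal{S}_1$, collapse that first tooth, erect a small auxiliary sawtooth on $[p_0,p_1]$ to descend again, swap sawteeth on $e'$ via Proposition~\ref{cambiosaw}, and continue tooth by tooth. This keeps height strictly below $h(L'')$ throughout but is not a commutation.

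The interacting case you flag as ``hard'' --- $\mu_1 = \mathcal{E}_{a,c}^b$ with $\mu_2 = \mathcal{S}$ a sawtooth on the newly created negative edge $bc$ --- also needs a specific construction that your outline does not supply. Here the paper introduces an auxiliary point $d$ close to $c$ (on $[a,c]$ when $ac$ is positive, or on the last tooth of an auxiliary sawtooth on $ac$ when $ac$ is negative) and then builds the sawtooth on $bc$ tooth by tooth via a chain of type-$\mathcal{W}$ moves $\mathcal{W}_{d,c}^{p_{m-1},q_m},\ \mathcal{W}_{d,p_{m-1}}^{p_{m-2},q_{m-1}},\ \ldots,\ \mathcal{W}_{d,p_1}^{b,q_1}$, each height-preserving. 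Your proposed induction on the number of shared edges or vertices does not obviously reduce anything here, since $bc$ does not exist in $L$ at all --- there is no ``less shared'' situation to fall back on. Until you supply these two explicit constructions (or equivalent ones), the argument remains a plan rather than a proof.
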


\begin{proof}
The moves which increase height are the following:
\begin{itemize}
\item type-$\mathcal{S}^{-1}$ moves, that create a negative edge;
\item some of the type-$\mathcal{E}$ moves (Figure \ref{3casi}), precisely, if $\mathcal{E}=\mathcal{E}_{a,c}^b$:
\begin{itemize}
\item[1.] the move sends the positive edge $ac$ to a positive and a negative one (without loss of generality, we assume $bc$ is the negative edge);
\item[2.] the move sends the positive edge $ac$ to two negative edges;
\item[3.] the move sends the negative edge $ac$ to two negative edges.
\end{itemize}
\end{itemize}

\begin{figure}[htb]
\centering
\includegraphics{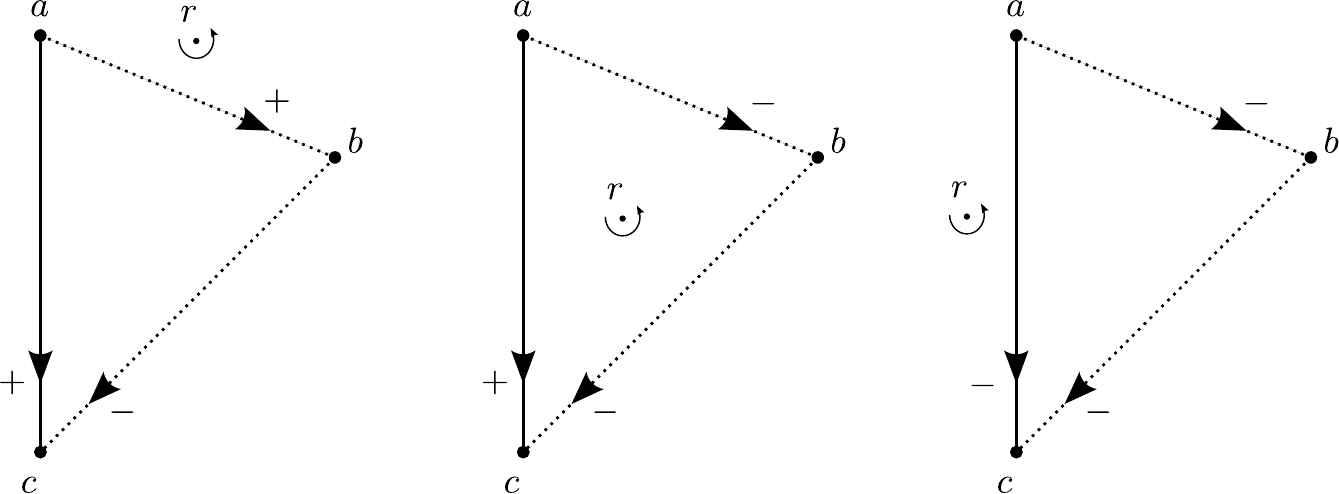}
\caption{The three possible type-$\mathcal{E}$ moves.}
\label{3casi}
\end{figure}

There is also another type-$\mathcal{E}$ move that increases height, namely $(\mathcal{E}_{a,c}^b)^{-1}$, where $ac$ is negative while $ab$ and $bc$ are both positive. Anyway this is a particular case of sawtooth, so we do not need to separately consider it.

The possible chains are the following:
\begin{itemize}
\item[(a)] first we apply a type-$\mathcal{S}^{-1}$ move, and then a $\mathcal{S}$ move;
\item[(b)] first we apply a type-$\mathcal{E}$ move, and then a $\mathcal{S}$ move (or viceversa first we apply a type-$\mathcal{S}^{-1}$ move, and then a $\mathcal{E}^{-1}$ move);
\item[(c)] first we apply a type-$\mathcal{E}$ move and then a $\mathcal{E}^{-1}$ move.
\end{itemize}

It is easy to see that the proof of the lemma in case (c) follows by case (b). Indeed if the first move is $\mathcal{E}_{a,c}^b$, where $bc$ is negative, and $(\mathcal{E}_{x,z}^y)^{-1}$ is the second one, then thanks to case $2$ of Lemma \ref{sawexi} we can erect a sawtooth $\mathcal{S}$ on $bc$ which avoids $ac$, and so
\begin{align*}
\mathcal{E}_{a,c}^b \circ (\mathcal{E}_{x,z}^y)^{-1}=\big(\mathcal{E}_{a,c}^b \circ \mathcal{S} \big) \circ \big( \mathcal{S}^{-1} \circ (\mathcal{E}_{x,z}^y)^{-1} \big).
\end{align*} 

Using case (b) separately on $\mathcal{E}_{a,c}^b \circ \mathcal{S} $ and $\mathcal{S}^{-1} \circ (\mathcal{E}_{x,z}^y)^{-1}$ we can prove case (c).

In case (a), let $\mathcal{S}_1^{-1}=(\mathcal{S}_{p_0,\dots,p_m}^{q_1,\dots,q_m})^{-1}: L \rightarrow L''$ be the first sawtooth and $\mathcal{S}_2=\mathcal{S}_{r_0,\dots,r_k}^{t_1,\dots,t_k}: L'' \rightarrow L'$ be the second one. If they are erected on the same edge, then by Lemma \ref{cambiosaw} we can factor $\mathcal{S}_2 \circ \mathcal{S}_1^{-1}$ using type-$\mathcal{R}$ and $\mathcal{W}$ moves, thus obtaining a chain of links with the same height. Now let us suppose that the two sawteeth are constructed on two different edges, namely $e$ and $e'$. Then the chain is as in Figure \ref{solosaw}.

\begin{figure}[htb]
\centering
\includegraphics{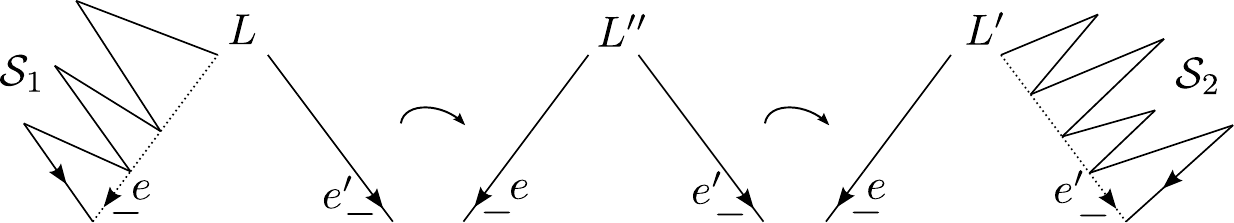}
\caption{Two sawteeth of different edges}
\label{solosaw}
\end{figure}

Now we will decompose this chain without reaching the level $h(L'')$ of height, as shown in Figure \ref{labelliamola}. Thanks to Lemma \ref{sawexi}, there exists a sawtooth on $e'$ that avoids the first tooth of $\mathcal{S}_1$, namely $[p_0,p_1,q_1]$. By applying it, we decrease height by $1$ and now we can apply the move $(\mathcal{E}_{p_0,p_1}^{q_1})^{-1}$. There exists another sawtooth on $e'$ which avoids the second tooth of $\mathcal{S}_1$. We can change the sawtooth with type-$\mathcal{R}$ and $\mathcal{W}$ moves, by Lemma \ref{cambiosaw}, preserving height. Now applying Lemma \ref{sawexi} again we erect a small sawtooth on $[p_0,p_1]$ in order to decrease height again, and apply a $\mathcal{S}^{-1}$ move destroying the whole sawtooth on $[p_0,p_2]$, thus obtaining a single negative edge. This move is licit, since the small sawtooth on $[p_0,p_1]$ has been erected after the one erected on $e'$, so this latter avoids every tooth of the small sawtooth and the triangle $[p_1,p_2,q_2]$ by construction. Iterating this process, we obtain the negative edge $e$ and a sawtooth on $e'$. Changing the sawtooth again with type-$\mathcal{R}$ and $\mathcal{W}$ moves, we get the link $L'$. As we have already noticed, the height is lower than $h(L'')$ during the whole process.

For case (b), it suffices to consider the chain
\begin{align*}
L \xrightarrow{\mathcal{E}} L'' \xrightarrow{\mathcal{S}} L'
\end{align*}
the other being identical. Fix $\mathcal{E}=\mathcal{E}_{a,c}^b$ and $\mathcal{S}=\mathcal{S}_{p_0,\dots,p_m}^{q_1,\dots,q_m}$. If the sawtooth is erected on an edge of $L''$ that also belongs to $L$ (that means not $ab$ nor $bc$), there exists another sawtooth $\mathcal{S}'$ on the same edge, avoiding the triangle $[a,b,c]$. So we can apply the following chain of deformation moves to $L$:
\begin{align*}
(\mathcal{S} \circ \mathcal{S}'^{-1}) \circ \mathcal{E} \circ \mathcal{S}'
\end{align*}
obtaining $L'$. Since $\mathcal{S} \circ \mathcal{S}'^{-1}$ can be decomposed into type-$\mathcal{R}$ and $\mathcal{W}$ moves, the height is lower than $h(L'')$ during the whole process.

If the sawtooth $\mathcal{S}$ is erected on $ab$ or $bc$ (we can assume that it is on $bc$), we proceed as follows. First let us suppose that $ac$ is positive. Let $d$ be a point on $[a,c]$, close enough to $c$ so that $dp_i$ is a positive line segment for every $i$. First we notice that $\mathcal{R}_{a,c}^d$ is licit. Then we erect the sawtooth on $[b,c]$ tooth by tooth using type-$\mathcal{W}$ moves as follows (Figure \ref{le5sfere}).
\begin{itemize}
\item Since $d$ is close to $c$, the triangles $[c,d,p_{m-1}]$ and $[c,p_{m-1},q_m]$ avoid the link; so we can apply the move $\mathcal{W}_{d,c}^{p_{m-1},q_m}$;
\item similarly, for every $i>0$ we can apply the moves $\mathcal{W}_{d,p_i}^{p_{i-1},q_i}$.
\end{itemize}

We now operate the following deformation chain:
\begin{align*}
\mathcal{W}_{d,p_1}^{b,q_1} \circ \dots \circ \mathcal{W}_{d,p_{m-1}}^{p_{m-2},q_{m-1}} \circ \mathcal{W}_{d,c}^{p_{m-1},q_m} \circ \mathcal{R}_{a,c}^d
\end{align*}

Finally, we apply $(\mathcal{E}_{a,b}^d)^{-1}$; since $ad$ and $db$ are positive, this moves increases height by $1$ at most. The result is $L'$. Again, the height is lower than $h(L'')$ during the whole process.

If the edge $ac$ is negative, then necessarily also $ab$ and $bc$ are negative. Without loss of generality, the sawtooth $\mathcal{S}=\mathcal{S}_{p_0,\dots,p_m}^{q_1,\dots,q_m}$ is erected on $bc$. Let us proceed as follows (Figure \ref{le7sfere}).

First, we construct a sawtooth $\mathcal{S}_{r_0,\dots,r_k}^{t_1,\dots,t_k}$ on the edge $ac$, which avoids the convex hull of $\mathcal{S}_{p_0,\dots,p_m}^{q_1,\dots,q_m}$ (this is possible thanks to Lemma \ref{sawexi}, case $1.$). Let $d$ be a point on $[t_k,c]$, close to $c$, so that $dp_i$ is positive for every $i$. We apply a sequence of type-$\mathcal{W}$ moves:
\begin{align*}
\mathcal{W}_{d,p_1}^{b,q_1} \circ \dots \circ \mathcal{W}_{d,p_{m-1}}^{p_{m-2},q_{m-1}} \circ \mathcal{W}_{d,c}^{p_{m-1},q_m}.
\end{align*}

Now, let $x$ be a point on $ac$ close to $c$; let us apply the move $\mathcal{T}_{t_k,d,b}^{t_k,x,b}$, which can be decomposed into type-$\mathcal{R}$ and $\mathcal{W}$ moves, so again the height is preserved. Last we can operate $(\mathcal{S}_{r_0, \dots,r_{k-1},x}^{t_1,\dots,t_k})^{-1}$ (which increases height by $1$) and finally the move $(\mathcal{E}_{a,b}^x)^{-1}$ (which preserves height), thus obtaining the link $L'$. Also in this case, the height is lower than $h(L'')$ during the whole process.
\end{proof}

\begin{proof}[Proof of Markov's Theorem]
Let $L= \hat{\beta}$ and $L'= \hat{\beta'}$. Since $L$ and $L'$ are in braid position, their height is $0$. Let
\begin{align*}
L=L_0 \rightarrow L_1 \rightarrow \dots \rightarrow L_n=L'
\end{align*}
be a chain of deformation moves from $L$ to $L'$. We will prove the theorem by induction on the maximum height appearing in the chain.

As showed in Remark \ref{h0}, if height is constantly $0$ along the whole chain, then the deformation moves which occur in the chain can only be of type $\mathcal{R}$ and $\mathcal{W}$, that means the braids $\beta$ and $\beta'$ are joined by a chain of conjugation and stabilization moves.

Let us suppose that height is not constantly $0$, and let $h_{MAX}>0$ be the maximum height. By Lemma \ref{lemma1} we may suppose that for each $i$ such that $h(L_i)=h_{MAX}$, then $h(L_{i-1})<h_{MAX}$ and $h(L_{i+1})<h_{MAX}$. Indeed, if $h(L_i)=h(L_{i+1})$ then we may join $L_i$ and $L_{i+1}$ by a chain of deformation moves
\begin{align*}
L_i=M_0 \rightarrow M_1 \rightarrow \dots \rightarrow M_k = L_{i+1}
\end{align*}
where $h(M_j) < h(L_i)$ for every $j = 1, \dots k-1$.

Now we can apply Lemma \ref{lemma2} to every $L_{i-1} \rightarrow L_i \rightarrow L_{i+1}$ such that $h(L_i)=h_{MAX}$, thus obtaining subchains
\begin{align*}
L_{i-1}=N_0 \rightarrow N_1 \rightarrow \dots \rightarrow N_l = L_{i+1}
\end{align*}
where $h(N_{j}) < h(L_i)$ for every $j$. In this way we have produced a new chain of deformation moves joining $L$ and $L'$, whose maximum height is lower than $h_{MAX}$. By induction hypothesis we conclude.
\end{proof}

\begin{figure}[p]
\centering
\includegraphics{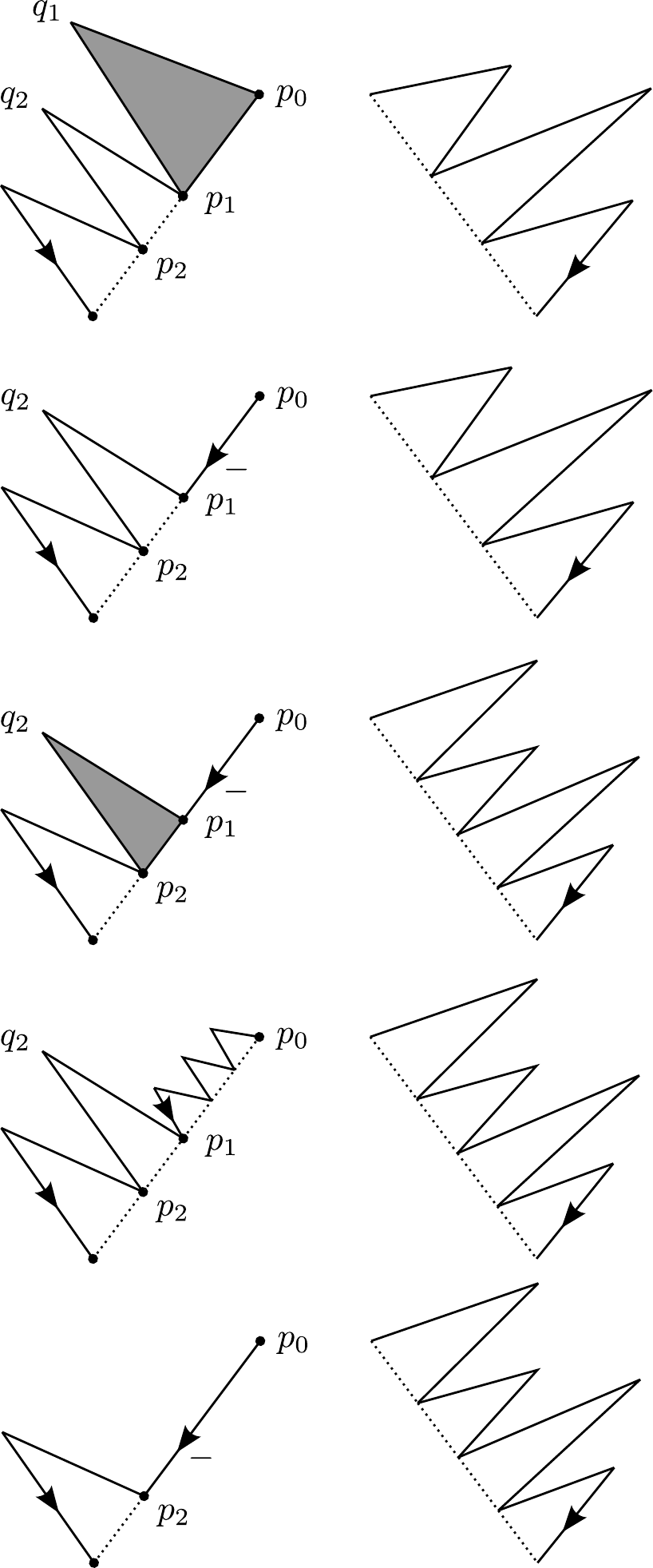}
\caption{Factorization of the chain with two sawteeth on different edges}
\label{labelliamola}
\end{figure}

\begin{figure}[p]
\centering
\includegraphics{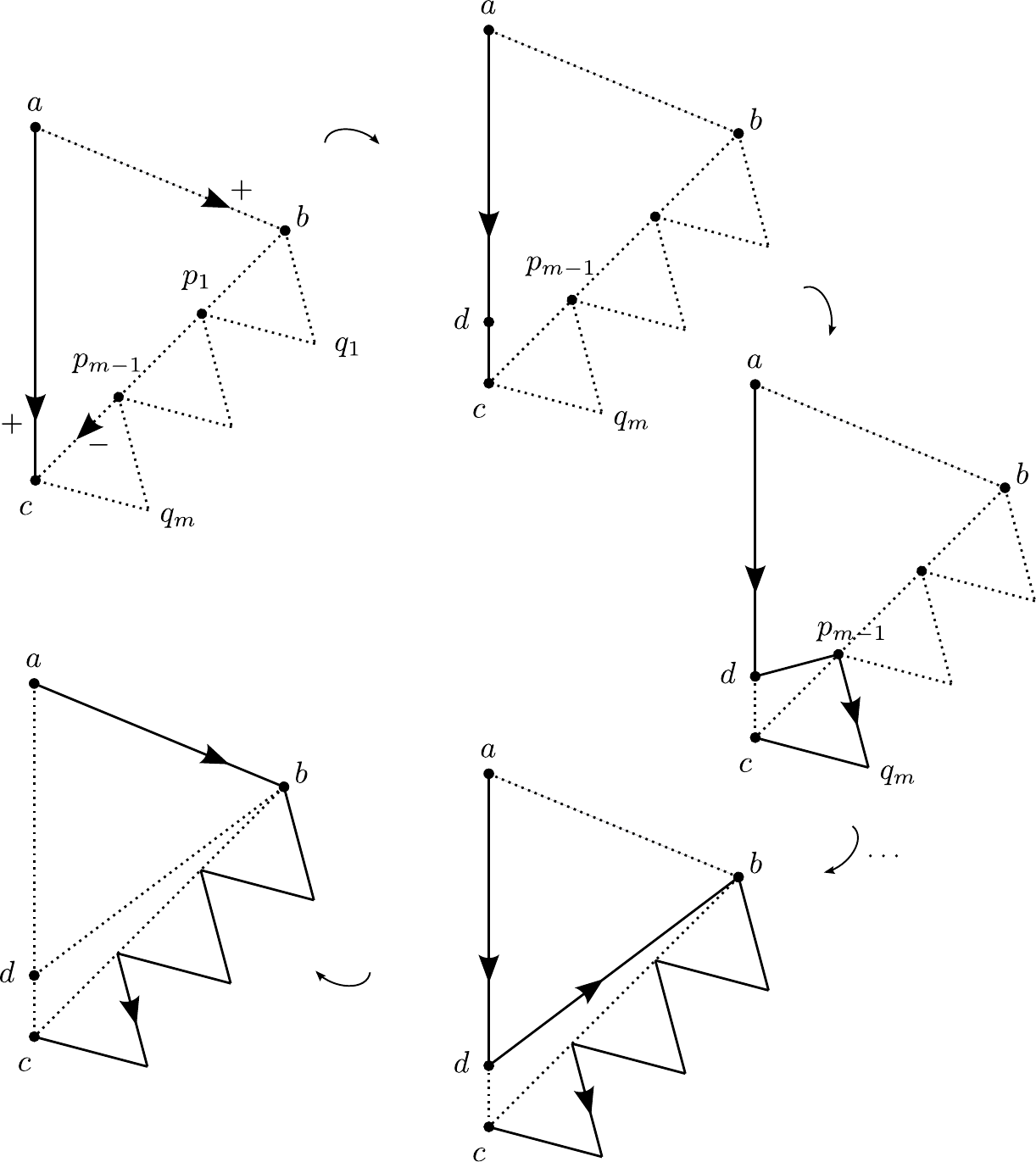}
\caption{Decomposition of the chain when $ac$ is positive}
\label{le5sfere}
\end{figure}

\newgeometry{left=0cm, right=0cm}
\thispagestyle{empty}
\begin{figure}[htb]
\centering
\includegraphics{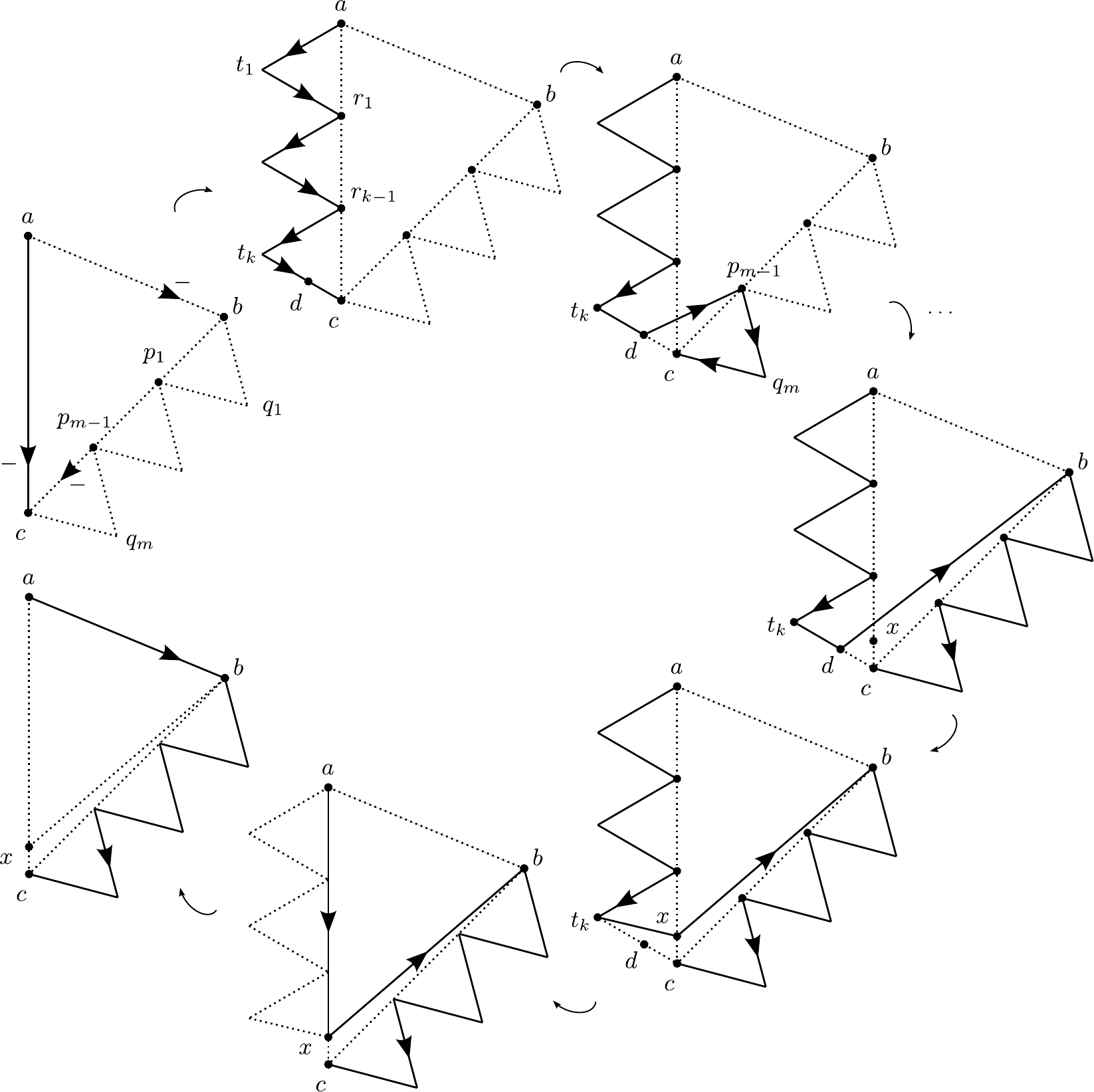}
\caption{Decomposition of the chain when $ac$ is negative}
\label{le7sfere}
\end{figure}
\restoregeometry

\end{document}